\newtheorem{theorem}{Theorem}[section]
\newtheorem{example}[theorem]{Example}%[section]
\newtheorem{remark}[theorem]{Remark}%[section]
\newtheorem{lemma}[theorem]{Lemma}%[section]
\newtheorem{proposition}[theorem]{Proposition}%[section]
\newtheorem{corollary}[theorem]{Corollary}%[section]
\numberwithin{equation}{section}
\newcommand{\C}{\mathbb C}
\newcommand{\Z}{\mathbb{Z}}
\def\mg{\mathfrak{g}}
\def\mm{\mathfrak{m}}
\def\mn{\mathfrak{n}}
\def\mh{\mathfrak{h}}
\def\p{\partial}
\def\sl{\mathfrak{sl}}
\def\sp{\mathfrak{sp}}
\def\gl{\mathfrak{gl}}
\def\e{\epsilon}
\def\ba{\mathbf{a}}
\def\br{\mathbf{r}}
\def\bm{\mathbf{m}}
\def\b1{\mathbf{1}}
\newcommand{\ch}{\mathcal{H}}
\title[Minimal $W$-algebra]{\bf Minimal nilpotent finite $W$-algebra and cuspidal module category of $\mathfrak{sp}_{2n}$}
\author{Genqiang Liu and Mingjie Li}
\date{\today}
\begin{document}
\begin{abstract}Let $U_S$ be the localization of $U(\mathfrak{sp}_{2n})$
with respect to the Ore subset $S$ generated by the root vectors
$X_{\epsilon_1-\epsilon_2},\dots,X_{\epsilon_1-\epsilon_n},
X_{2\epsilon_1}$.
We show that the minimal nilpotent finite $W$-algebra $W(\mathfrak{sp}_{2n}, e)$ is isomorphic to the centralizer $C_{U_S}(B)$ of some subalgebra $B$ in $U_S$, and it can be identified with a tensor product factor of $U_S$. As an application, we show that  the  category of weight $\mathfrak{sp}_{2n}$-modules
 with injective actions of all  root vectors and finite-dimensional weight spaces
 is equivalent to the category of finite-dimensional modules
over $W(\mathfrak{sp}_{2n}, e)$, explaining the coincidence that both of them are semi-simple.
\end{abstract}
\vspace{5mm}
\maketitle
\noindent{{\bf Keywords:}  Finite $W$-algebra, cuspidal module, centralizer, equivalence}
\vspace{2mm}

\noindent{{\bf Math. Subj. Class.} 2020: 17B05, 17B10, 17B20, 17B35}

\section{introduction}

The theory of  $W$-algebras shows up in the two-dimensional conformal field theory, see \cite{Z}. Finite $W$-algebras and affine $W$-algebras
are important $W$-algebras. A finite $W$-algebra $W(\mg, e)$ is certain associative algebra associated to a complex simple Lie algebra $\mg$ and a nilpotent element $e \in\mg$. Affine $W$-algebras are  vertex algebras introduced by using quantization of the Drinfeld-Sokolov reduction,  see \cite{FF,KRW}.
Finite $W$-algebras can be realized as the Zhu algebras of corresponding affine $W$-algebras \cite{DK}.
 In the literature of mathematical physics, finite
$W$-algebras appeared in the work of de Boer and Tjin \cite{BT} from the viewpoint
of BRST quantum hamiltonian reduction.   It is a generalization of the
universal enveloping algebra $U(\mg)$. In mathematics, the concept of finite $W$-algebras can be traced back to  the study of Whittaker vectors and Whittaker
modules in the famous  work of \cite{K},  see also \cite{Ly}.
The general definition of
finite $W$-algebras were introduced by Premet \cite{Pr1}. The finite $W$-algebras are quantizations of Poisson algebras of functions on the
Slodowy  slice at $e$ to the adjoint orbit of $e$. Finite $W$-algebras  and their representation theory have been extensively studied by mathematicians and physicists, see \cite{ BG, GG, BK, L1, L2, LO, Pe, PT,Pr2, T, W}.

From the definition of finite $W$-algebras, it is necessary and important to study various interactions between finite $W$-algebras and corresponding universal enveloping  algebras $U(\mg)$. When the adjoint orbit of $e$  has the minimal dimension,  the gap between $W(\mg, e)$   and $U(\mg)$ is very small, see \cite{Pr2,Pe}. In this case, $W(\mg, e)$ is called the  minimal nilpotent finite $W$-algebra.
 In the present paper, we use the tensor product decomposition
to research the minimal nilpotent finite $W$-algebra $W(\sp_{2n},e)$ for $\sp_{2n}$.
One of our main results is the following
theorem.

\begin{theorem}\label{theorem1} There is an algebra isomorphism $U_S\cong D^\p_n\otimes W(\sp_{2n},e)$, where $D_n^{\p}$ is  the localization of the  Weyl algebra $D_n$ with respect to the Ore subset
$\{\p_1^{i_1}\dots\p_n^{i_n}\mid i_1,\dots,i_n\in \Z_{\geq 0}\}$.
\end{theorem}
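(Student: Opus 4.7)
The plan is to exhibit a copy of $D_n^\p$ inside $U_S$, identify its centralizer with $W(\sp_{2n},e)$, and then verify that the multiplication map $D_n^\p\otimes W(\sp_{2n},e)\to U_S$ is bijective. For the partial-derivative side I would take $\p_i := X_{\epsilon_1-\epsilon_{i+1}}$ for $1\le i\le n-1$ and $\p_n := X_{2\epsilon_1}$. These pairwise commute in $\sp_{2n}$ because for any distinct pair the sum of weights is not a root (e.g.\ $(\epsilon_1-\epsilon_i)+(\epsilon_1-\epsilon_j)=2\epsilon_1-\epsilon_i-\epsilon_j$ and $(\epsilon_1-\epsilon_i)+2\epsilon_1=3\epsilon_1-\epsilon_i$ are not roots of $\sp_{2n}$), so they generate a polynomial subalgebra $B\subset U(\sp_{2n})$; the Ore set $S$ is exactly the set of nonzero monomials in $B$, so localizing produces $\C[\p_1^{\pm 1},\dots,\p_n^{\pm 1}]\subset U_S$.

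The delicate step, and the main obstacle, is constructing dual coordinates $x_1,\dots,x_n\in U_S$ satisfying $[\p_i,x_j]=\delta_{ij}$ and $[x_i,x_j]=0$. The raw candidates $\p_i^{-1}X_{-\epsilon_1+\epsilon_{i+1}}$ (for $i<n$) and $\p_n^{-1}X_{-2\epsilon_1}$ produce the correct diagonal commutators (modulo Cartan corrections) but carry off-diagonal terms proportional to $X_{\epsilon_{j+1}-\epsilon_{i+1}}\in\mg_0$ coming from brackets of transverse root vectors. I would cancel these obstructions iteratively using the invertibility of the $\p_k$'s, defining
\[
x_i = \p_i^{-1}\cdot Y_i + (\text{quadratic corrections in the localized generators}),
\]
where $Y_i$ is the relevant negative root vector, and then checking $[x_i,x_j]=0$ by direct bracket calculation. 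The closure of this correction scheme should follow from the fact that the span of the chosen negative root vectors is Lagrangian in $\mg_{-1}\oplus\mg_{-2}$ with respect to the Kirillov pairing associated to $e=X_{2\epsilon_1}$, a standard feature of minimal nilpotent gradings.

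Once the embedding $D_n^\p\hookrightarrow U_S$ is in place, I would identify $C_{U_S}(D_n^\p)\cong W(\sp_{2n},e)$ using the paper's prior identification $W(\sp_{2n},e)\cong C_{U_S}(B)$: commuting with the additional $x_i$'s picks out the correct subalgebra, with a GK-dimension check ($\dim\sp_{2n}-2n=2n^2-n$) confirming the match. The multiplication map $\mu\colon D_n^\p\otimes W(\sp_{2n},e)\to U_S$ is then an algebra homomorphism because the two factors commute inside $U_S$. For surjectivity I would use a PBW-style rewriting, pushing $\p_i$'s through $x_j$'s via the Weyl relations to put any element of $U_S$ in normal form as a Weyl monomial times a centralizer element. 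For injectivity I would invoke the simplicity of $D_n^\p$: any two-sided ideal of the tensor product meeting $1\otimes W(\sp_{2n},e)$ trivially must vanish, which a Kazhdan-filtration argument makes rigorous by reducing the claim to an analogous equality of Poisson algebras on the Slodowy slice.
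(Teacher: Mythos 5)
Your high-level strategy --- embed a copy of $D_n^\p$ into $U_S$, identify its centralizer with $W(\sp_{2n},e)$, and show the multiplication map is bijective --- is the same as the paper's, and your choice of the $\p$-side ($X_{\e_1-\e_2},\dots,X_{\e_1-\e_n},X_{2\e_1}$, whose monomials form the Ore set $S$) is exactly the paper's. The genuine gap is in the step you yourself flag as the main obstacle: the dual coordinates $x_i$ cannot be built from the negative root vectors. Concretely, $[X_{\e_1-\e_i},X_{\e_i-\e_1}]=h_1-h_i$ and $[X_{2\e_1},X_{-2\e_1}]=4h_1$ are nonzero Cartan elements, so one application of $\mathrm{ad}(\p_i)$ to your raw candidate $\p_i^{-1}Y_i$ lands in $\mh_n\cdot S^{-1}$ and a \emph{second} application is needed to reach scalars: for instance $(\mathrm{ad}\,X_{\e_1-\e_i})^2\bigl(X_{\e_1-\e_i}^{-1}X_{\e_i-\e_1}\bigr)=-2\neq 0$. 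Thus under the adjoint action of your $\p$'s the localized negative root vectors behave like elements of degree \emph{two} in the sought-for dual variables; no correction scheme of the kind you describe can produce an element $x_i=\p_i^{-1}Y_i+(\text{corrections})$ with $[\p_i,x_i]=1$, because any such $x_i$ must satisfy $(\mathrm{ad}\,\p_i)^2(x_i)=0$, forcing the ``correction'' to carry the entire degree-one content by itself. That correction is precisely the right answer: the dual coordinates come from the Cartan subalgebra, $t_i=h_iX_{\e_1-\e_i}^{-1}$ for $i\geq 2$ and $t_1=-\tfrac12(h_1+\dots+h_n)X_{2\e_1}^{-1}$, which satisfy the Weyl relations by a one-line computation. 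The subalgebra these generate together with the $\p^{\pm1}$'s is exactly the paper's $B$, so $C_{U_S}(D_n^\p)=C_{U_S}(B)\cong W(\sp_{2n},e)$ follows from Theorem \ref{W-iso} with no further work; whereas the fully corrected negative root vectors you were aiming at are the elements $A_{\e_k-\e_1},A_{-\e_1-\e_k},A_{-2\e_1},\dots$, which live in the \emph{centralizer} factor, not the Weyl factor.

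A secondary consequence of the same misstep: your sketch never says where $\mh_n$ goes in the decomposition (in the paper it sits inside the Weyl factor, since $h_i$ is essentially $t_i\p_i$), so surjectivity of the multiplication map is unaddressed even granting your $x_i$; and with $x_i$ built from negative root vectors the identification of $C_{U_S}(\langle \p_i,x_i\rangle)$ with $C_{U_S}(B)$ that you invoke is no longer automatic, since the two subalgebras being centralized would differ. On the positive side, your injectivity argument via simplicity and centrality of $D_n^\p$ is valid and arguably cleaner than the paper's leading-term argument, and your Lagrangian/GK-dimension remarks are consistent with the setup --- but both are downstream of the broken construction.
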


This tensor product decomposition makes the Skryabin equivalence (see the Appendix in \cite{Pr1}) between the category of Whittaker $\sp_{2n}$-modules and the category of  $W(\sp_{2n},e)$-modules very natural, see Remark \ref{equi-remark}.
 As an application, we give a very explicit presentation of the unique one dimensional representation of  $W(\sp_{2n},e)$, see Proposition \ref{one-dim}. One dimensional representations for any finite $W$-algebra were studied in \cite{PT,T}. A weight $\sp_{2n}$-module with finite-dimensional weight spaces is called a
 cuspidal module, if every root vector acts injectively on it.
We also
show that  the cuspidal module category of $\mathfrak{sp}_{2n}$ is equivalent to the category of finite-dimensional modules
over the minimal nilpotent finite $W$-algebra $W(\sp_{2n},e)$, see Theorem \ref{equ-cw}. Cuspidal modules are very important modules, since every simple weight module $M$ with finite-dimensional weight spaces over a reductive finite-dimensional Lie algebra $\mg$ is isomorphic to
the unique simple quotient of a parabolically induced generalized Verma module $U(\mg)\otimes _{U(\mathfrak{p})} S$,
where $S$ is a cuspidal module over the Levi component of $\mathfrak{p}$, see \cite{F,M}. A result of Fernando implies that only  simple Lie algebras of type $A$ and $C$ have cuspidal modules.
In view of
Theorem \ref{theorem1}, it will be very interesting to describe the noncommutative fractional field of $W(\sp_{2n},e)$, since this might help to resolve the Gelfand-Kirillov conjecture for $\sp_{2n}$. In the theory of vertex algebras, centralizer (called coset in vertex algebras) construction
is also one of the major ways to construct new vertex operator algebras from given ones. For example,  the coset construction
of the minimal series principal affine $W$-algebras of ADE types was
given in \cite{ACL} in full generality.

\section{Preliminaries on the minimal nilpotent finite $W$-algebra $W(\sp_{2n},e)$}

In this section, we collect  some necessary definitions and results, including $\sp_{2n}$ and the
finite $W$-algebra $W(\sp_{2n},e)$.

\subsection{Lie algebra $\sp_{2n}$}

Fix an integral number $n\geq 2$, $J_n=\left(
                                       \begin{array}{cc}
                                        0 & I_n \\
                                          -I_n& 0 \\
                                       \end{array}
                                     \right)
$.
Recall that
$$\mathfrak{sp}_{2n}:=\{A\in \gl_{2n}\mid A^tJ_n+J_nA=0\}.$$
Any matrix in $\mathfrak{sp}_{2n}$ is of the form:
  $$\left(
      \begin{array}{cc}
        X & P \\
        Q  & -X^t \\
      \end{array}
    \right),$$
  where  $X,P,Q\in M_{n\times n}(\C)$, $P=P^t,Q=Q^t$.

Let $\{e_{i,j}| i,j=1,\dots,2n\}$ be the standard basis of  $\gl_{2n}$.
Clearly $\mathfrak{sp}_{2n}$ has the following basis:
$$\aligned h_i:&=e_{ii}-e_{n+i,n+i},\ 1\leq i\leq n,\\
X_{\e_i-\e_j}:&=e_{i,j}-e_{n+j,n+i}, \ 1\leq i\neq j\leq n,\\
X_{\e_i+\e_j}:&=e_{i,n+j}+e_{j,n+i}, \ 1\leq i\leq  j\leq n,\\
X_{-\e_i-\e_j}:&=e_{n+j,i}+e_{n+i,j}, \ 1\leq i\leq j\leq n.
\endaligned$$

Let $\mh_n:=\sum_{i=1}^n \C h_i$ which is a Cartan subalgebra of
$\mathfrak{sp}_{2n}$.  The subalgebra $\mh_n\oplus (\oplus_{1\leq i \neq  j \leq n}\C X_{\e_i-\e_j})$ is isomorphic to
$\gl_n$.
For each $i\in \{1,\dots,n\}$,
define $\e_i\in \mh_n^*$ such that $\e_i(h_j)=\delta_{ij}$.
Then for any $h\in \mh$, we have
$$\aligned\
[h,X_{\e_i-\e_j}]&=(\e_i-\e_j)(h)X_{\e_i-\e_j},\\
[h,X_{\e_i+\e_j}]&=(\e_i+\e_j)(h)X_{\e_i+\e_j},\\
[h,X_{-\e_i-\e_j}]&=-(\e_i+\e_j)(h)X_{-\e_i-\e_j}.
\endaligned$$

So the root system of $\mathfrak{sp}_{2n}$ is
$$\Phi=\{\e_i-\e_j, \e_k+\e_l,-\e_k-\e_l\mid 1\leq i\neq j \leq n,
1\leq k, l\leq n\}.$$ The positive system is
$$ \Phi^+=\{\e_i-\e_j, \e_k+\e_l,\mid 1\leq i< j \leq n,
1\leq k, l\leq n\}.$$

Set
\begin{displaymath}
\mathfrak{n}_+:=\bigoplus_{\alpha\in \Phi^+}\mathfrak{\mg}_\alpha,\ \
\mathfrak{n}_-:=\bigoplus_{\alpha\in \Phi^+}\mathfrak{\mg}_{-\alpha},
\end{displaymath}where $\mg_\alpha=\{x\in \mathfrak{sp}_{2n}\mid [h,x]=\alpha(h)x,\ \forall\ h\in\mh_n\}$.
Then the decomposition
\begin{equation}\label{eq1}
\sp_{2n}=\mathfrak{n}_{-}\oplus\mathfrak{h}_n\oplus \mathfrak{n}_{+}
\end{equation}
is a {\em triangular decomposition} of $\sp_{2n}$.

%\begin{lemma} \begin{enumerate}[$($a$)$]
% \item $[X_{\e_1-\e_i},X_{\e_1+\e_i}]=X_{2\e_1}$, $2 \leq i\leq n$.
% \item $[X_{-2\e_1},X_{\e_i+\e_1}]=-2X_{\e_i-\e_1}$, $2 \leq i\leq n$.
%   \end{enumerate}
%\end{lemma}
%
%\begin{proof}(a) Indeed,
%$$\aligned \ [X_{\e_1-\e_i},X_{\e_1+\e_i}]&=[e_{1i}-e_{n+i,n+1}, e_{i,n+1}+e_{1,n+i}]\\
%&= 2e_{1,n+1}=X_{2\e_1}.
%\endaligned$$
%
%(b) Explicitly,
%$$\aligned \ [X_{-2\e_1},X_{\e_i+\e_1}]&=[2e_{n+1,1}, e_{i,n+1}+e_{1,n+i}]\\
%&= -2e_{i1}+2e_{n+1,n+i}=-2X_{\e_i-\e_1}.
%\endaligned$$
%\end{proof}

For an $\sp_{2n}$-module $V$ and $\lambda\in \C^n$, denote
$$V_\lambda=\{v\in V\mid h_iv=\lambda_iv,\  \text{for all}\ i=1,\dots,n\}.$$
A nonzero element in $V_\lambda$ is called a weight vector.
An
$\sp_{2n}$-module $V$ is called a weight module if $V=\oplus_{\lambda\in \C^n} V_\lambda$. For a weight module $V$,
set  $\mathrm{supp}(V)=\{\lambda \in \C^n\mid V_\lambda\neq 0\}$.
 A weight $\sp_{2n}$-module is called a highest weight module if it is generated by a weight vector annihilated by $\mn_+$.
 A weight $\sp_{2n}$-module $V$ with finite-dimensional weight spaces is called a
cuspidal module provided that for any $\alpha\in \Phi$, $X_\alpha$  acts injectively on $V$.

Let $A_n=\C[t_1,\dots, t_n]$. The  Weyl algebra $D_{n}$
 over $A_n$  is the unital  associative algebra
over $\mathbb{C}$ generated by $t_1,\dots,t_n$,
$\partial_1,\dots,\partial_n$ subject to
the relations
$$[\partial_i, \partial_j]=[t_i,t_j]=0,\qquad [\partial_i,t_j]=\delta_{i,j},\ 1\leq i,j\leq n.$$

The following homomorphism  can be verified with a direct computation, see \cite{BL}.
\begin{proposition}\label{D-map} There is an associative algebra  homomorphism
$$\phi: U(\sp_{2n})\rightarrow D_n$$ such that
$$\aligned
&h_i\mapsto t_i\p_i+\frac{1}{2},\ \
X_{\e_i-\e_j}\mapsto t_i\p_j, i \neq j,\\
&X_{\e_i+\e_j}\mapsto t_it_j, \ \
X_{-\e_i-\e_j}\mapsto -\p_i\p_j.
\endaligned$$
\end{proposition}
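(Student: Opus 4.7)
My plan is to invoke the universal property of $U(\sp_{2n})$: any Lie algebra homomorphism from $\sp_{2n}$ into $D_n$ (with commutator bracket) extends uniquely to an associative algebra homomorphism. So the task reduces to checking that the assignment on the basis of $\sp_{2n}$ listed in Section 2.1 respects all Lie brackets. Conceptually, the quadratic elements of $D_n$ spanned by $\{t_it_j,\ \p_i\p_j,\ t_i\p_j+\tfrac12\delta_{ij}\}$ form a Lie subalgebra under commutator that is canonically isomorphic to $\sp(V)$ for $V=\mathrm{span}\{t_1,\dots,t_n,\p_1,\dots,\p_n\}$ with the symplectic form coming from $[\p_i,t_j]=\delta_{ij}$; the map $\phi$ is precisely this identification, i.e.\ the oscillator (metaplectic) realization of $\sp_{2n}$.

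For the direct check I would split the verification into three blocks. The Cartan block is immediate: $[t_i\p_i+\tfrac12,\ t_j\p_j+\tfrac12]=0$, and the brackets of $t_i\p_i$ against $t_k\p_l$, $t_kt_l$, and $-\p_k\p_l$ produce the eigenvalues $(\delta_{ik}-\delta_{il})$, $(\delta_{ik}+\delta_{il})$, $-(\delta_{ik}+\delta_{il})$, which are exactly $(\e_k-\e_l)(h_i)$, $(\e_k+\e_l)(h_i)$, and $-(\e_k+\e_l)(h_i)$. Among root vectors, sample computations such as $[t_i\p_j,t_j\p_k]=t_i\p_k$ for distinct $i,j,k$ and $[t_i\p_j,t_kt_l]=\delta_{jl}t_it_k+\delta_{jk}t_it_l$ reproduce the $\sl_n$ brackets and their mixed analogues with the $X_{\e_k+\e_l}$ and $X_{-\e_k-\e_l}$ generators.

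The most delicate cases are $[X_{\e_i+\e_j},X_{-\e_k-\e_l}]$, which can land in the Cartan subalgebra. Here the $+\tfrac12$ shift appearing in $\phi(h_i)$ is exactly what compensates the normal-ordering ambiguity from moving a $\p$ past a $t$ at a coincident index; for instance $[t_it_j,-\p_i\p_j]=t_i\p_i+t_j\p_j+1$ for $i\neq j$, which equals $\phi(h_i+h_j)=\phi([X_{\e_i+\e_j},X_{-\e_i-\e_j}])$, with the constant $1$ absorbed by the two shifts of $\tfrac12$; the case $[t_i^2,-\p_i^2]$ similarly produces $4t_i\p_i+2=4\phi(h_i)$, matching the $\sp_2$-triple attached to $X_{2\e_i}$. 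The main obstacle is pure bookkeeping: the sign convention $X_{-\e_i-\e_j}\mapsto -\p_i\p_j$ (rather than $+\p_i\p_j$) must be cross-checked against at least one explicit structure constant computed from the $\gl_{2n}$ embedding of Section 2.1, but once set consistently all the remaining pairs fall into a small number of symmetric templates, or can be bypassed entirely by invoking the symplectic identification described above.
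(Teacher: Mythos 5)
Your verification is correct and follows the same route the paper takes: the paper simply asserts that the homomorphism "can be verified with a direct computation" (citing [BL]), and your proposal carries out exactly that computation, correctly identifying the only delicate point — that the $+\tfrac12$ shifts in $\phi(h_i)$ absorb the normal-ordering constants in $[t_it_j,-\p_i\p_j]$ and $[t_i^2,-\p_i^2]$ so that the brackets landing in the Cartan subalgebra match. The oscillator-representation framing you add is a standard and accurate way to see why all remaining cases must close up.
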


Any module $M$ over $D_n$ can be defined to be an $\sp_{2n}$-module
 through the homomorphism $\phi$. For example, if $M=(\C[t_1^{\pm 1}]/\C[t_1])\otimes \cdots  \otimes(\C[t_n^{\pm 1}]/\C[t_n])$, then the $\sp_{2n}$-submodule of  $M$ generated by $t_1^{-1}\otimes \dots \otimes t_n^{-1}$ is isomorphic to the simple highest weight module $L(-\frac{\b1}{2})$, where $-\frac{\b1}{2}=(-\frac{1}{2},\dots, -\frac{1}{2})$.

\subsection{Minimal nilpotent finite $W$-algebra $W(\sp_{2n},e)$ and its twisted form}
The general theory of finite $W$-algebras were introduced in \cite{Pr1}.
Let $h=-h_1, e=e_{n+1,1}, f=e_{1,n+1}$. Then $\{e,h,f\}$
is an $\sl_2$-triple. By \cite{Pr2}, $e$ is a minimal nilpotent element of
$\sp_{2n}$.
 The
eigenspace decomposition of the adjoint action
$\text{ad} h: \sp_{2n}\rightarrow \sp_{2n}$  provides a $\Z$-grading
$$\sp_{2n}=\sp_{2n}(-2)\oplus \sp_{2n}(-1)\oplus\sp_{2n}(0)\oplus \sp_{2n}(1)\oplus \sp_{2n}(2),$$
that is, $\sp_{2n}(i)=\{x\in \sp_{2n}\mid [h,x]=ix\}$, $i=0,1,-1,2,-2$.
This $\Z$-grading of $\sp_{2n}$
will be referred to as a {\it Dynkin $\Z$-grading} which is good for $e$.
We can see that
$$\aligned
\sp_{2n}(2)&=\C X_{-2\e_1},\ \ \sp_{2n}(-2)=\C X_{2\e_1},\\
\sp_{2n}(1)&= \oplus_{i=2}^n(\C X_{\e_i-\e_1}\oplus \C X_{-\e_i-\e_1}),\\
\sp_{2n}(-1)&=\oplus_{i=2}^n(\C X_{\e_1-\e_i}\oplus \C X_{\e_i+\e_1}),\\
\sp_{2n}(0)&=
\oplus(\oplus_{2\leq i\neq j\leq n}\C X_{\e_i-\e_j})
 \oplus(\oplus_{2\leq k\leq l\leq n}\C X_{\e_k+\e_l})\\
&\ \ \  \oplus(\oplus_{2\leq k\leq l\leq n}\C X_{-\e_k-\e_l})\oplus \mh_n.
\endaligned $$

Define a skew-symmetric bilinear form $(\cdot,\cdot)$ on $\sp_{2n}(-1)$ by
$$(x,y):=\mathrm{tr}([x,y]e),\ \text{for all}\ x,y\in \sp_{2n}(-1).$$
Since $\mathrm{ad}e: \sp_{2n}(-1)\rightarrow \sp_{2n}(1)$ is bijective, the bilinear form  $(\cdot,\cdot)$  on $\sp_{2n}(-1)$ is non-degenerate. One can see that $\oplus_{i=2}^n\C X_{\e_1-\e_i}$ is a Lagrangian (i.e. maximal
isotropic) subspace of $\sp_{2n}(-1)$ with respect to the form $(\cdot,\cdot)$.
 Let $$\mm_n=(\oplus_{j=2}^n\C X_{\e_1-\e_j})\oplus\C X_{2\e_1},$$ which is a commutative
subalgebra of $\sp_{2n}$.
The map
$$\theta:\mm_n \rightarrow \C, X\mapsto \frac{1}{2}\text{tr}(Xe),$$ defines a
one-dimensional
$\mm_n $-module $\C_{\theta}:=\C v_{\b1}$. It is clear that $$\theta(X_{2\e_1})=1, \theta(X_{\e_1-\e_j})=0, \ \text{for any}\   j\neq 1.$$
Define  the induced $\sp_{2n}$-module
(called generalized Gelfand-Graev module)
$$Q_{e} := U(\sp_{2n}) \otimes_{U(\mm_n)}\C_{\theta}.$$
The finite $W$-algebra $W(\sp_{2n},e)$ is defined to be
the endomorphism algebra
$$W(\sp_{2n},e):= \text{End}_{\sp_{2n}}(Q_{e} )^{\text{op}}. $$
This is the original definition of finite $W$ algebras defined by Premet \cite{Pr1}.

It was shown by Premet that the associated graded algebra  $\mathrm{Gr}(W(\sp_{2n},e))$  of $W(\sp_{2n},e)$
with respect to the Kazhdan filtration is isomorphic to the symmetric  algebra $S(\sp_{2n}^e)$ of the centralizer $\sp_{2n}^e$ of $e$ in $\sp_{2n}$, see Theorem 4.6 in \cite{Pr1}.

By the definition of $\theta$, $X_{\e_1-\e_j}$ acts locally nilpotently
on $Q_e$, for any $2\leq j\leq n$.
In order to obtain a new $\sp_{2n}$-module  from $Q_e$ with injective action of $X_{\e_1-\e_j}$, we will twist $Q_e$ by a suitable automorphism of $\sp_{2n}$. This idea is motivated by the work of Kostant  \cite{K}. When a Lie homomorphism $\eta: \mn_+\rightarrow \C$ satisfies that $\eta(X_{\alpha})\neq 0$ for any simple root $\alpha$, Kostant found that Whittaker vectors in the universal Whittaker module $Y_\eta$ are closely related with the center of $U(\sp_{2n})$.

Let $X=X_{\e_2+\e_1}+\dots+X_{\e_n+\e_1}$ and $\sigma$ be the inner automorphism of $\sp_{2n}$ defined the inner derivation $-\mathrm{ad} X$.
From $$[e_{1,i}-e_{n+i,n+1}, e_{1,n+j}+e_{j,n+1}]=2\delta_{ij}e_{1,n+1}=\delta_{ij}X_{2\e_1}, \ i,j \neq 1,$$
it can be checked that
$\sigma(X_{\e_1-\e_i})=X_{\e_1-\e_i}+X_{2\e_1}, \sigma(X_{2\e_1})=X_{2\e_1}$, for any  $i\neq 1$.

The $\sp_{2n}$-module $Q_e$ can be twisted by $\sigma$ to be a new $\sp_{2n}$-module $Q_e^{\sigma}$.   Explicitly the action of
$\sp_{2n}$ on  $Q_e^{\sigma}$ is defined by
\begin{equation}\label{twisted-mod}u\cdot v=\sigma(u)v, \ \ u\in \sp_{2n}, v\in Q_e.\end{equation}
So in $Q_e^{\sigma}$, we have that $X_{\e_1-\e_i}\cdot v_\b1=v_\b1, X_{2\e_1}\cdot v_\b1=v_\b1$ for any $i>1$. Define $$W^\sigma(\sp_{2n},e):= \text{End}_{\sp_{2n}}(Q^\sigma_{e} )^{\text{op}}. $$

\begin{lemma}$W(\sp_{2n},e)\cong W^\sigma(\sp_{2n},e)$.
\end{lemma}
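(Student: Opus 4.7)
The plan is to exploit the fact that $\sigma$ is an automorphism of $\sp_{2n}$ and that twisting an $\sp_{2n}$-module by an automorphism leaves its endomorphism algebra literally unchanged as a subalgebra of $\mathrm{End}_\C$ of the underlying vector space. So in fact I expect to prove the stronger statement that the two algebras coincide on the nose, not merely that they are isomorphic.

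First I would observe that $Q_e^\sigma$ and $Q_e$ share the same underlying vector space by construction; only the $\sp_{2n}$-action differs, via the formula $u \cdot v = \sigma(u)v$ from \eqref{twisted-mod}. Consequently both $\mathrm{End}_{\sp_{2n}}(Q_e)$ and $\mathrm{End}_{\sp_{2n}}(Q_e^\sigma)$ can be identified with subsets of the ambient algebra $\mathrm{End}_\C(Q_e)$, and the algebra structure on each is induced from composition in $\mathrm{End}_\C(Q_e)$. Next I would unwind the intertwining condition: a $\C$-linear map $f \in \mathrm{End}_\C(Q_e)$ belongs to $\mathrm{End}_{\sp_{2n}}(Q_e^\sigma)$ iff $f(\sigma(u)v) = \sigma(u)f(v)$ for every $u \in \sp_{2n}$ and $v \in Q_e$. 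Because $\sigma$ is an inner automorphism of $\sp_{2n}$ (arising from $\exp(-\mathrm{ad}\, X)$ with $X \in \mn_+$ ad-nilpotent), it is in particular a bijection, so $\sigma(\sp_{2n}) = \sp_{2n}$. Setting $u' = \sigma(u)$, the intertwining condition becomes $f(u'v) = u'f(v)$ for all $u' \in \sp_{2n}$, which is exactly the defining condition for $f \in \mathrm{End}_{\sp_{2n}}(Q_e)$.

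From this the two endomorphism algebras coincide as subalgebras of $\mathrm{End}_\C(Q_e)$, and therefore so do their opposite algebras, giving $W(\sp_{2n},e) = W^\sigma(\sp_{2n},e)$. I do not anticipate any real obstacle; the only point to verify carefully is that $\sigma$ really is a well-defined automorphism of $\sp_{2n}$, which follows immediately from the local nilpotency (hence nilpotency on the finite-dimensional Lie algebra $\sp_{2n}$) of $\mathrm{ad}\, X$.
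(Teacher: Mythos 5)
Your proof is correct and follows essentially the same route as the paper: both identify the two endomorphism algebras inside $\mathrm{End}_{\C}(Q_e)$ and observe that, since $\sigma$ is a bijection of $\sp_{2n}$, the intertwining condition for the twisted action is equivalent to that for the original action, so the identity map gives the isomorphism. Your version is slightly more explicit about invoking surjectivity of $\sigma$ to get both inclusions, but the content is the same.
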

\begin{proof}
From $$g(u\cdot v)=g(\sigma(u)v)
=\sigma(u)g(v)=u\cdot g(v),\ \forall \  u\in \sp_{2n}, \ v\in Q_e,$$ it follows that
the identity map $$W(\sp_{2n},e)\rightarrow W^\sigma(\sp_{2n},e): g\mapsto g $$ is a well-defined algebra isomorphism.
\end{proof}

\begin{remark} In fact, we also have that $W^\sigma(\sp_{2n},e) \cong W(\sp_{2n},\sigma^{-1}(e))$, since  $[\sigma(u), e]=0$ if only if $[u, \sigma^{-1}(e)]=0$ for all $u\in \sp_{2n}$.
\end{remark}

We denote the vector $(1, 1,\dots, 1)\in \C^n$ by $\b1$.
Let $\widetilde{\ch}_{\b1}$ be the  category
whose objects are $\sp_{2n}$-modules $M$ satisfying that  $X_{2\e_1}-1$,
 $X_{\e_1-\e_2}-1,\dots, X_{\e_1-\e_n}-1$ act locally nilpotently on $M$.
 For an $\sp_{2n}$-module $M$ in $\widetilde{\ch}_{\b1}$,
denote
 $$\mathrm{wh}_{\b1}(M)
=\{ v\in M\mid X_{\e_1-\e_2}v=\dots=X_{\e_1-\e_n}v=X_{2\e_1}v=v\}.$$
Any nonzero vector in $\mathrm{wh}_{\b1}(M)$ is called a Whittaker vector. A module in $\widetilde{\ch}_{\b1}$ is called a Whittaker module.

 By the equivalence given by Skryabin (see the Appendix in \cite{Pr1}) and isomorphism between $W^\sigma(\sp_{2n},e)$ and $W(\sp_{2n},e)$, we have the following equivalence.
 \begin{lemma}
 The functors $$M\mapsto \mathrm{wh}_{\b1}(M),
\ \ \ \  V\mapsto Q^\sigma_{e}\otimes_{W^\sigma(e)} V, $$ are inverse equivalence
 between $\widetilde{\ch}_{\b1}$ and the
category of $W^\sigma(\sp_{2n}, e)$-modules.
\end{lemma}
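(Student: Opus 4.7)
The plan is to deduce the equivalence from Skryabin's theorem via the twist by $\sigma$. Skryabin's original result (the Appendix of \cite{Pr1}) applied to the pair $(e,\mm_n)$ with character $\theta$ gives inverse equivalences $N\mapsto \mathrm{wh}(N):=\{v\in N\mid X_{\e_1-\e_j}v=0,\ X_{2\e_1}v=v\}$ and $V\mapsto Q_e\otimes_{W(\sp_{2n},e)}V$ between the category $\widetilde{\ch}_\theta$ of $\sp_{2n}$-modules on which the $X_{\e_1-\e_j}$ ($j>1$) and $X_{2\e_1}-1$ act locally nilpotently, and the category of $W(\sp_{2n},e)$-modules. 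I would take this as the input and transport it along $\sigma$.

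Next, I would verify that the twist functor $N\mapsto N^\sigma$ gives an equivalence $\widetilde{\ch}_\theta\simeq\widetilde{\ch}_{\b1}$. On $N^\sigma$ the operator $X_{2\e_1}-1$ acts as $\sigma(X_{2\e_1})-1=X_{2\e_1}-1$ on $N$, which is locally nilpotent by hypothesis. Similarly, on $N^\sigma$ the operator $X_{\e_1-\e_i}-1$ acts as $\sigma(X_{\e_1-\e_i})-1=X_{\e_1-\e_i}+(X_{2\e_1}-1)$ on $N$. Since $(\e_1-\e_i)+2\e_1=3\e_1-\e_i$ is not a root of $\sp_{2n}$, the bracket $[X_{\e_1-\e_i},X_{2\e_1}]$ vanishes, so this is a sum of two commuting locally nilpotent operators and is therefore itself locally nilpotent. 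Applying the same argument to $\sigma^{-1}$ produces the inverse functor.

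Then I would match the two pairs of functors. Unwinding the definitions, $v\in\mathrm{wh}_{\b1}(N^\sigma)$ iff $\sigma(X_{\e_1-\e_i})v=v$ and $\sigma(X_{2\e_1})v=v$ in $N$, iff $(X_{\e_1-\e_i}+X_{2\e_1})v=v$ and $X_{2\e_1}v=v$, iff $X_{\e_1-\e_i}v=0$ and $X_{2\e_1}v=v$, iff $v\in\mathrm{wh}(N)$. On the other side, the preceding lemma identifies $W^\sigma(\sp_{2n},e)$ with $W(\sp_{2n},e)$ via the identity map, so any $W^\sigma(\sp_{2n},e)$-module $V$ carries a canonical $W(\sp_{2n},e)$-module structure with the same underlying data, and a direct check from \eqref{twisted-mod} shows that $Q_e^\sigma\otimes_{W^\sigma(\sp_{2n},e)}V$ is, as an $\sp_{2n}$-module, exactly the $\sigma$-twist of $Q_e\otimes_{W(\sp_{2n},e)}V$.

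Putting these ingredients together, the two functors in the statement factor as Skryabin's equivalence composed with the $\sigma$-twist, and hence are themselves mutually inverse equivalences between $\widetilde{\ch}_{\b1}$ and the category of $W^\sigma(\sp_{2n},e)$-modules. I expect the only genuine technical point to be the transfer of local nilpotency for the non-principal generators $X_{\e_1-\e_i}$; once one notes $[X_{\e_1-\e_i},X_{2\e_1}]=0$, the remainder of the argument reduces to formal manipulations with the twist functor and the algebra isomorphism $W(\sp_{2n},e)\cong W^\sigma(\sp_{2n},e)$ supplied by the previous lemma.
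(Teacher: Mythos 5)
Your argument is correct and follows the same route the paper takes: the paper simply asserts the lemma as a consequence of Skryabin's equivalence together with the identification $W(\sp_{2n},e)\cong W^{\sigma}(\sp_{2n},e)$, and you have filled in the transport along the twist functor that this assertion implicitly relies on. The details you supply (local nilpotency of $X_{\e_1-\e_i}+(X_{2\e_1}-1)$ as a sum of commuting locally nilpotent operators, the matching of Whittaker vectors, and the identification of the induced modules) are all accurate and are exactly what is needed to make the paper's one-line justification precise.
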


Let $\mathcal{H}_{\b1}$ be the full subcategory of $\widetilde{\mathcal{H}}_{\b1}$ consisting of
modules $M$ such that $\mathrm{wh}_{\b1}(M)$ is finite-dimensional.
From the isomorphism $W^\sigma(\sp_{2n}, e)\cong W(\sp_{2n}, e)$, $\mathcal{H}_{\b1}$ is equivalent to the category
$W$-$\mathrm{fmod}$ of finite-dimensional
modules over  $W(\sp_{2n}, e)$.  One of our goals is  to find the connection between $W$-$\mathrm{fmod}$ and the category of weight modules over $\sp_{2n}$

\section{Centralizer realization of the finite $W$-algebra $W(\sp_{2n},e)$}
In this section, we  give generators of the finite $W$-algebra $W(\sp_{2n},e)$ in certain  localization of $U(\sp_{2n})$, and its explicit realization in terms of centralizer.

\subsection{Centralizer of $\mh_n\oplus \mm_n$}
Since $\text{ad} X_{2\e_1}, \text{ad}X_{\e_1-\e_2},\dots, \text{ad}X_{\e_1-\e_n}$ act locally nilpotent on $U(\sp_{2n})$, the set $S:=\{X_{2\e_1}^{i_1}X_{\e_1-\e_2}^{i_2}\dots X_{\e_1-\e_n}^{i_n}\mid i_1,\dots, i_n\in\Z_{\geq0}\}$ is an Ore subset of $U(\sp_{2n})$, see Lemma 4.2 in \cite{M}.
   Denote
 by $U_{S}$  the localization $U(\sp_{2n})$ with respect to $S$.
The reason of introducing $U_{S}$ is that
$X_{2\e_1},X_{\e_1-\e_2},\dots, X_{\e_1-\e_n}$ act bijectively on the module $Q_e^{\sigma}$.

In order to find distinguished  Whittaker vectors in $Q^\sigma_e$,  we define the following elements in $U_S$:
 \begin{equation}\label{x-w-def}
 \aligned
  A_{\e_k+\e_1}&=X_{\e_k+\e_1}X_{\e_1-\e_k}X_{2\e_1}^{-1}
-h_k,\\
A_{\e_k+\e_l}&=X_{\e_k+\e_l}X_{\e_1-\e_k}X_{\e_1-\e_l}X_{2\e_1}^{-1}
-X_{\e_1+\e_l}h_kX_{\e_1-\e_l}X_{2\e_1}^{-1}\\
&\ \ \  -X_{\e_1+\e_k}h_lX_{\e_1-\e_k}X_{2\e_1}^{-1}
+h_l(h_k-\delta_{lk}),\\
  A_{\e_i-\e_j}&=X_{\e_i-\e_j}X_{\e_1-\e_i}X_{\e_1-\e_j}^{-1}-h_i,\ \   i\neq j, \\
A_{\e_k-\e_1}&=X_{\e_k-\e_1}X_{\e_1-\e_k}
 +\sum_{j=2, j\neq k}^n X_{\e_k-\e_j}(h_j-1)X_{\e_1-\e_k}X_{\e_1-\e_j}^{-1}\\
 & \ \ + h_k(h_k-1)-X_{\e_k+\e_1}(I_n+2)X_{\e_1-\e_k}X_{2\e_1}^{-1},
 \endaligned\end{equation}
 and
  \begin{equation}\label{x-w-def1}
 \aligned
A_{-2\e_1}
&= X_{-2\e_1}X_{2\e_1}
+\sum_{k=2}^n2X_{-\e_1-\e_k}h_kX_{\e_1-\e_k}^{-1}X_{2\e_1}
-\sum_{k=2}^nX_{-2\e_k}h_kX_{\e_1-\e_k}^{-2}X_{2\e_1}\\
&\ \ \ +\sum_{k,l=2}^n X_{-\e_k-\e_l}h_kh_lX_{\e_1-\e_k}^{-1}X_{\e_1-\e_l}^{-1}X_{2\e_1}
+(I_n+2n)I_n,\\
A_{-\e_1-\e_k}
&= X_{-\e_1-\e_k}X_{\e_1-\e_k}^{-1}X_{2\e_1}
+\sum_{l=2}^n X_{-\e_k-\e_l}h_lX_{\e_1-\e_k}^{-1}X_{\e_1-\e_l}^{-1}X_{2\e_1}
+I_n,\\
 A_{-\e_k-\e_l}
&=X_{-\e_k-\e_l}X_{\e_1-\e_k}^{-1}X_{\e_1-\e_l}^{-1}X_{2\e_1},
\endaligned\end{equation}
where $I_n=h_1+\cdots+h_n$, $2 \leq i,j, k, l\leq n$.

Let $B$  be the subalgebra
 of $U_S$ generated by
 $$h_1,\dots,h_n, X_{\e_1-\e_2}^{\pm 1},\dots, X_{\e_1-\e_n}^{\pm 1}, X_{2\e_1}^{\pm 1},$$
 and $C_{U_S}(B)$ be the commutant  of $B$ in $U_S$, i.e.,
 $C_{U_S}(B)=\{u\in U_S\mid [u, B]=0\}$.

By direct computations, we have the following lemma.

\begin{lemma} \label{comm-lemm}For any $i,j,k,l\in\{2,\dots,n\}$ with $i\neq j$, we have that
 $$A_{\e_k+\e_l}, A_{\e_k+\e_1},
 A_{\e_i-\e_j}, A_{\e_k-\e_1}, A_{-2\e_1},
A_{-\e_1-\e_k},
A_{-\e_l-\e_k}\in C_{U_S}(B).$$
\end{lemma}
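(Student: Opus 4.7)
The plan is to verify the commutation conditions directly: for each $A_\alpha$ defined in (\ref{x-w-def})--(\ref{x-w-def1}), one must show $[A_\alpha, h_i]=0$ for $i=1,\ldots,n$, $[A_\alpha, X_{2\e_1}]=0$, and $[A_\alpha, X_{\e_1-\e_j}]=0$ for $j=2,\ldots,n$. Since these elements together with the inverses $X_{\e_1-\e_k}^{\pm 1}$, $X_{2\e_1}^{\pm 1}$ generate $B$, this will suffice.

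First, the commutation with the Cartan generators is immediate from a weight count: inspection of each summand in (\ref{x-w-def}) and (\ref{x-w-def1}) shows that the weights of the appearing root vectors (and the inverses $X_{\e_1-\e_k}^{-1}$, $X_{2\e_1}^{-1}$, which carry weights $-(\e_1-\e_k)$ and $-2\e_1$) always sum to zero. For instance, the leading term of $A_{\e_k+\e_l}$ has weight $(\e_k+\e_l)+(\e_1-\e_k)+(\e_1-\e_l)-2\e_1=0$, and analogous checks hold for every other summand. Hence each $A_\alpha$ is $\mh_n$-invariant.

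For $[X_{2\e_1}, A_\alpha]$ and $[X_{\e_1-\e_j}, A_\alpha]$ I would expand using the Leibniz rule $[y, u_1\cdots u_r]=\sum_s u_1\cdots [y,u_s]\cdots u_r$, extended to inverses via $[y, X^{-1}]=-X^{-1}[y,X]X^{-1}$. Two observations cut the work drastically. First, $X_{2\e_1}$ and $X_{\e_1-\e_2},\ldots,X_{\e_1-\e_n}$ span the commutative subalgebra $\mm_n$, so they and their inverses commute with one another, killing the bulk of the expansion. Second, the remaining brackets one has to compute are of two shapes only: brackets with the Cartan factors (determined by the weight of the other operand) and brackets with the non-$\mm_n$ root vectors appearing in $A_\alpha$; the latter collapse to a short table, whose key identities are
\begin{equation*}
[X_{\e_1-\e_k}, X_{\e_k+\e_1}] = X_{2\e_1}, \qquad [X_{2\e_1}, X_{\e_k-\e_1}] = -2 X_{\e_k+\e_1},
\end{equation*}
plus the automatic vanishings whenever the sum of weights fails to be a root.

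With this setup each verification collapses to a few lines. For $A_{\e_k+\e_1}$, the bracket with $X_{\e_1-\e_k}$ produces $X_{2\e_1}\cdot X_{\e_1-\e_k}X_{2\e_1}^{-1}=X_{\e_1-\e_k}$, exactly cancelling $[X_{\e_1-\e_k}, h_k]=X_{\e_1-\e_k}$; the bracket with $X_{2\e_1}$ vanishes term-by-term. The analogous pattern governs $A_{\e_i-\e_j}$, $A_{\e_k+\e_l}$, $A_{-\e_k-\e_l}$, and $A_{-\e_1-\e_k}$. The genuine obstacle is bookkeeping for $A_{\e_k-\e_1}$ and $A_{-2\e_1}$, whose definitions carry compound sums over the intermediate indices together with the $I_n$-shifts. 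Here the non-trivial commutators $[X_{2\e_1}, X_{\e_k-\e_1}]=-2X_{\e_k+\e_1}$ and $[X_{2\e_1}, I_n]=-2X_{2\e_1}$ (and the analogous $[X_{\e_1-\e_j}, I_n]=-X_{\e_1-\e_j}$) must match against contributions from the summand $X_{\e_k+\e_1}(I_n+2)X_{\e_1-\e_k}X_{2\e_1}^{-1}$ (resp.\ the double sum defining $A_{-2\e_1}$); the scalars $+2$ in $I_n+2$ and $+2n$ in $I_n+2n$ are chosen precisely to absorb these shifts, after which every surviving contribution telescopes to zero. No conceptual difficulty remains beyond tracking all the correction terms carefully.
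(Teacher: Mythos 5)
Your overall strategy is exactly the paper's: commutation with $\mh_n$ is immediate because every summand of each $A_\alpha$ has weight zero, and the rest is a term-by-term Leibniz computation against the generators $X_{2\e_1}, X_{\e_1-\e_2},\dots,X_{\e_1-\e_n}$ of $\mm_n$, using that $\mm_n$ is abelian to kill most terms. Your sample verification of $A_{\e_k+\e_1}$ is correct and matches the paper's case (2).

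However, one of the ``key identities'' you rely on for the hard cases is false: you assert $[X_{\e_1-\e_j}, I_n]=-X_{\e_1-\e_j}$, but $I_n=h_1+\cdots+h_n$ and $(\e_1-\e_j)(I_n)=1-1=0$, so in fact $[X_{\e_1-\e_j}, I_n]=0$. This is not a harmless typo: the terms containing $I_n$ occur precisely in $A_{\e_k-\e_1}$, $A_{-\e_1-\e_k}$ and $A_{-2\e_1}$, the three cases you defer to ``bookkeeping.'' If you run the Leibniz expansion with your identity, the factor $I_n+2$ in $X_{\e_k+\e_1}(I_n+2)X_{\e_1-\e_k}X_{2\e_1}^{-1}$ and the term $(I_n+2n)I_n$ in $A_{-2\e_1}$ each emit spurious contributions (e.g. $+X_{\e_k+\e_1}X_{\e_1-\e_q}X_{\e_1-\e_k}X_{2\e_1}^{-1}$) that cancel against nothing, and the computation does not close. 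The actual mechanism, as in the paper, is that $\mathrm{ad}\,X_{\e_1-\e_q}$ annihilates the $I_n$ factors outright; the cancellation in $[X_{\e_1-\e_q},A_{\e_k-\e_1}]$ instead comes from $[X_{\e_1-\e_q},X_{\e_k+\e_1}]=\delta_{qk}X_{2\e_1}$ together with $X_{2\e_1}(I_n+2)X_{2\e_1}^{-1}=I_n$ (here, and only here, is where the shift $+2$ absorbs $[I_n,X_{2\e_1}]=2X_{2\e_1}$), which then cancels the sum $\delta_{qk}\bigl(h_1+\sum_{j\geq 2}h_j\bigr)X_{\e_1-\e_k}$ produced by the first two groups of terms. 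You should also record the remaining nonvanishing brackets your ``short table'' omits, e.g.\ $[X_{\e_1-\e_q},X_{\e_k+\e_l}]=\delta_{qk}X_{\e_1+\e_l}+\delta_{ql}X_{\e_1+\e_k}$, $[X_{\e_1-\e_q},X_{-2\e_1}]=-2X_{-\e_1-\e_q}$, $[X_{\e_1-\e_q},X_{-\e_1-\e_k}]=-X_{-\e_k-\e_q}$ and $[X_{2\e_1},X_{-\e_1-\e_k}]=2X_{\e_1-\e_k}$, all of which are needed for cases (5)--(6). With the $I_n$ identity corrected and the table completed, your argument coincides with the paper's proof.
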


\begin{proof}
By the expressions in (\ref{x-w-def}) and (\ref{x-w-def1}), all considered elements commute with
$\mh_n$. We will verify  that these elements commute with $\mm_n$ one by one. Note that  $\mm_n=(\oplus_{j=2}^n\C X_{\e_1-\e_j})\oplus\C X_{2\e_1}$.

(1) From $$\aligned\  & [X_{\e_1-\e_q},X_{\e_k+\e_l}]
%=[e_{1,q}-e_{n+q,n+1}, e_{k,n+l}+e_{l,n+k}]\\
% &= \delta_{qk}e_{1,n+l}+\delta_{ql}e_{1,n+k}+\delta_{ql}e_{k,n+1}
% +\delta_{qk}e_{l,n+1}\\
 &=\delta_{qk}X_{\e_1+\e_l}+\delta_{ql}X_{\e_1+\e_k},
\endaligned$$
 we can see that
 $$\aligned\  [X_{\e_1-\e_q},A_{\e_k+\e_l}]&=
[X_{\e_1-\e_q},X_{\e_k+\e_l}X_{\e_1-\e_k}X_{\e_1-\e_l}X_{2\e_1}^{-1}
-X_{\e_1+\e_l}h_kX_{\e_1-\e_l}X_{2\e_1}^{-1}\\
&\ \ \  -X_{\e_1+\e_k}h_lX_{\e_1-\e_k}X_{2\e_1}^{-1}
+h_l(h_k-\delta_{lk})]\\
&= (\delta_{qk}X_{\e_1+\e_l}+\delta_{ql}X_{\e_1+\e_k})
X_{\e_1-\e_k}X_{\e_1-\e_l}X_{2\e_1}^{-1}\\
&\ \ \  -\delta_{ql}X_{2\e_1}h_kX_{\e_1-\e_l}X_{2\e_1}^{-1}- \delta_{qk}X_{\e_1+\e_l}X_{\e_1-\e_k}X_{\e_1-\e_l}X_{2\e_1}^{-1}\\
&\ \ \  - \delta_{qk}X_{2\e_1}h_lX_{\e_1-\e_k}X_{2\e_1}^{-1}- \delta_{ql}X_{\e_1+\e_k}X_{\e_1-\e_l}X_{\e_1-\e_k}X_{2\e_1}^{-1}\\
&\ \ \ + \delta_{ql}h_kX_{\e_1-\e_l}
 + \delta_{qk}h_lX_{\e_1-\e_k}\\
&= -\delta_{ql}X_{2\e_1}h_kX_{\e_1-\e_l}X_{2\e_1}^{-1}
  -\delta_{qk}X_{2\e_1}h_lX_{\e_1-\e_k}X_{2\e_1}^{-1}\\
 & \ \ \  + \delta_{ql}h_kX_{\e_1-\e_l}
 + \delta_{qk}h_lX_{\e_1-\e_k}\\
 &= 0.
\endaligned$$
Then by $ [X_{2\e_1},A_{\e_k+\e_l}]=0$, we have that $A_{\e_k+\e_l}\in C_{U_S}(B)$.

(2) It can be verified that
$$\aligned\  [X_{\e_1-\e_q},A_{\e_k+\e_1}]&=
[X_{\e_1-\e_q},X_{\e_k+\e_1}X_{\e_1-\e_k}X_{2\e_1}^{-1}
-h_k]\\
&= \delta_{qk}X_{2\e_1}X_{\e_1-\e_k}X_{2\e_1}^{-1}-\delta_{qk}X_{\e_1-\e_k}\\
 &=0,
\endaligned$$
and
$$\aligned\  [X_{2\e_1},A_{\e_k+\e_1}]&=
[X_{2\e_1},X_{\e_k+\e_1}X_{\e_1-\e_k}X_{2\e_1}^{-1}
-h_k]=0,
\endaligned$$
since $ k\neq 1$.

(3) Since $i,j\in\{2,\dots,n\}$,  $[X_{2\e_1},A_{\e_i-\e_j}]=0$. For any $k\in\{2,\dots,n\}$, we have that
$$\aligned\  [X_{\e_1-\e_k},A_{\e_i-\e_j}]&=[X_{\e_1-\e_k}, X_{\e_i-\e_j}X_{\e_1-\e_i}X_{\e_1-\e_j}^{-1}-h_i]\\
&=\delta_{ki}X_{\e_1-\e_j}X_{\e_1-\e_i}X_{\e_1-\e_j}^{-1}-\delta_{ki}X_{\e_1-\e_i}\\
&= 0.
\endaligned$$

(4)
From $$\aligned
\ [X_{\e_k-\e_1}, X_{2\e_1}]&=[e_{k1}-e_{n+1,n+k},2e_{1,n+1}]\\
&=2e_{k,n+1}+2e_{1,n+k}=2X_{\e_1+\e_k},
\endaligned$$
it can be computed that
$$\aligned\  & [X_{2\e_1},A_{\e_k-\e_1}]\\
&=[X_{2\e_1}, X_{\e_k-\e_1}X_{\e_1-\e_k}
 +\sum_{j=2}^n X_{\e_k-\e_j}(h_j-1)X_{\e_1-\e_k}X_{\e_1-\e_j}^{-1}\\
 & \ \  -X_{\e_k+\e_1}X_{\e_1-\e_k}(I_n+2)X_{2\e_1}^{-1}]\\
 &=-2X_{\e_1+\e_k}X_{\e_1-\e_k}+2X_{\e_1+\e_k}X_{\e_1-\e_k}\\
 &=0.
\endaligned$$

 From $$\aligned\  [X_{\e_k+\e_1}, X_{\e_1-\e_q}]
% &=
% [e_{1,n+k}+e_{k,n+1}, e_{1,q}-e_{n+q,n+1}]\\
% &=-\delta_{qk}2e_{1,n+1}
 =
 -\delta_{qk}X_{2\e_1},
 \endaligned$$

it follows that
$$\aligned\  [X_{\e_1-\e_q},A_{\e_k-\e_1}]&=[X_{\e_1-\e_q},
 X_{\e_k-\e_1}X_{\e_1-\e_k}
 +\sum_{j=2}^n X_{\e_k-\e_j}(h_j-1)X_{\e_1-\e_k}X_{\e_1-\e_j}^{-1}\\
 & \ \  -X_{\e_k+\e_1}(I_n+2)X_{\e_1-\e_k}X_{2\e_1}^{-1}]\\
 &=\delta_{qk}h_1X_{\e_1-\e_k}-X_{\e_k-\e_q}X_{\e_1-\e_k}
 +\sum_{j=2}^n \delta_{qk} X_{\e_1-\e_j}(h_j-1)X_{\e_1-\e_k}X_{\e_1-\e_j}^{-1}\\
& \ \ \ +X_{\e_k-\e_q}X_{\e_1-\e_q}X_{\e_1-\e_k}X_{\e_1-\e_q}^{-1}
  -\delta_{qk}X_{2\e_1}(I_n+2)X_{\e_1-\e_k}X_{2\e_1}^{-1} \\
  &=\delta_{qk}h_1X_{\e_1-\e_k}
 +\sum_{j=2}^n \delta_{qk} h_jX_{\e_1-\e_k}
  -\delta_{qk}I_nX_{\e_1-\e_k}\\
 &=0.
\endaligned$$

(5) From
$$\aligned\   [X_{2\e_1},X_{-\e_1-\e_k}] &
=[2 e_{1,n+1}, e_{n+1,k}+e_{n+k,1}]\\
&=2e_{1,k}-2e_{n+k,n+1}=2 X_{\e_1-\e_k},
\endaligned$$

we obtain that
$$\aligned\   [X_{2\e_1}, A_{-2\e_1}]
&=[X_{2\e_1}, X_{-2\e_1}X_{2\e_1}
+\sum_{k=2}^n2X_{-\e_1-\e_k}h_kX_{\e_1-\e_k}^{-1}X_{2\e_1}
-\sum_{k=2}^nX_{-2\e_k}h_kX_{\e_1-\e_k}^{-2}X_{2\e_1}\\
&\ \ \ +\sum_{k,l=2}^n X_{-\e_k-\e_l}h_kh_lX_{\e_1-\e_k}^{-1}X_{\e_1-\e_l}^{-1}X_{2\e_1}
+(I_n+2n)I_n
]\\
&=4h_1X_{2\e_1}+\sum_{k=2}^n4X_{\e_1-\e_k}h_kX_{\e_1-\e_k}^{-1}X_{2\e_1}
-2X_{2\e_1}I_n-2(I_n+2n)X_{2\e_1}\\
&=4h_1X_{2\e_1}+\sum_{k=2}^n4(h_k+1)X_{2\e_1}-4 (I_n+n-1)X_{2\e_1}\\
&=0.
\endaligned$$

From
$$\aligned\   [X_{\e_1-\e_q}, X_{-2\e_1}]&=[e_{1,q}-e_{n+q,n+1}
, 2e_{n+1,1}]\\
&=-2e_{n+1,q}-2e_{n+q,1}=-2X_{-\e_1-\e_q},
\endaligned$$
and
$$\aligned\   [X_{\e_1-\e_q}, X_{-\e_1-\e_k}]&
=[e_{1,q}-e_{n+q,n+1}, e_{n+1,k}+e_{n+k,1}]\\
&= -e_{n+k,q}-e_{n+q,k}=-X_{-\e_k-\e_q},
\endaligned$$
we have that
$$\aligned\   [X_{\e_1-\e_q}, A_{-2\e_1}]
&=[X_{\e_1-\e_q}, X_{-2\e_1}X_{2\e_1}
+\sum_{k=2}^n2X_{-\e_1-\e_k}h_kX_{\e_1-\e_k}^{-1}X_{2\e_1}
-\sum_{k=2}^nX_{-2\e_k}h_kX_{\e_1-\e_k}^{-2}X_{2\e_1}\\
&\ \ \ +\sum_{k,l=2}^n X_{-\e_k-\e_l}h_kh_lX_{\e_1-\e_k}^{-1}X_{\e_1-\e_l}^{-1}X_{2\e_1}
+(I_n+2n)I_n,]\\
&=-2X_{-\e_1-\e_q}X_{2\e_1}
-2\sum_{k=2}^nX_{-\e_k-\e_q}h_kX_{\e_1-\e_k}^{-1}X_{2\e_1}
+2X_{-\e_1-\e_q}X_{2\e_1}-X_{-2\e_q}X_{\e_1-\e_q}^{-1}X_{2\e_1}\\
& \ \ \ +\sum_{k=2}^nX_{-\e_k-\e_q}h_kX_{\e_1-\e_k}^{-1}X_{2\e_1}
+\sum_{l=2}^nX_{-\e_l-\e_q}(h_l+\delta_{ql})X_{\e_1-\e_l}^{-1}X_{2\e_1}
\\
&= 0.
\endaligned$$

(6)
From $$\aligned\  [X_{\e_1-\e_q}, X_{-\e_k-\e_l}]
&= [e_{1,q}-e_{n+q,n+1}, e_{n+k,l}+e_{n+l,k}]=0, k,l\neq 1,
\endaligned$$
we get that
 $$\aligned\
[X_{\e_1-\e_q}, A_{-\e_1-\e_k}]
&=[X_{\e_1-\e_q}, X_{-\e_1-\e_k}X_{\e_1-\e_k}^{-1}X_{2\e_1}
+\sum_{l=2}^n X_{-\e_k-\e_l}h_lX_{\e_1-\e_k}^{-1}X_{\e_1-\e_l}^{-1}X_{2\e_1}
+I_n]\\
&= -X_{-\e_k-\e_q}X_{\e_1-\e_k}^{-1}X_{2\e_1}
+X_{-\e_k-\e_q}X_{\e_1-\e_k}^{-1}X_{2\e_1}\\
&=0,
\endaligned$$
and
 $$\aligned\
[X_{2\e_1}, A_{-\e_1-\e_k}]
&=[X_{2\e_1}, X_{-\e_1-\e_k}X_{\e_1-\e_k}^{-1}X_{2\e_1}
+\sum_{l=2}^n X_{-\e_k-\e_l}h_lX_{\e_1-\e_k}^{-1}X_{\e_1-\e_l}^{-1}X_{2\e_1}
+I_n]\\
&= 2 X_{\e_1-\e_k}X_{\e_1-\e_k}^{-1}X_{2\e_1}-2X_{2\e_1}\\
&=0.
\endaligned$$

(7)
From $$\aligned\  [X_{\e_1-\e_q}, X_{-\e_k-\e_l}]
&= [X_{2\e_1}, X_{-\e_k-\e_l}]=0, k,l\neq 1,
\endaligned$$
we obtain that
$$\aligned\
[X_{\e_1-\e_q}, A_{-\e_k-\e_l}]
&=[X_{\e_1-\e_q}, X_{-\e_k-\e_l}X_{\e_1-\e_k}^{-1}X_{\e_1-\e_l}^{-1}X_{2\e_1}]=0,
\endaligned$$
and
 $$\aligned\
[X_{2\e_1}, A_{-\e_k-\e_l}]
&=[X_{2\e_1}, X_{-\e_k-\e_l}X_{\e_1-\e_k}^{-1}X_{\e_1-\e_l}^{-1}X_{2\e_1}]=0.
\endaligned$$

\end{proof}

The next theorem tells us that $C_{U_S}(B)$ is a tensor product factor of the localized enveloping algebra
$U_{S}$.
 \begin{theorem}\label{main-iso-th}  We have that $U_{S}=B  C_{U_S}(B)\cong B\otimes  C_{U_S}(B)$ and $Z(U_{S})\cong Z(C_{U_S}(B))$.
\end{theorem}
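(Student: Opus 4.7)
The plan is to show the stronger statement that the multiplication map
$$ \mu:\ B\otimes_\C C_{U_S}(B)\ \longrightarrow\ U_S,\qquad b\otimes c\mapsto bc, $$
is an algebra isomorphism; the center statement will then follow formally. The map $\mu$ is a well-defined algebra homomorphism since $B$ and $C_{U_S}(B)$ commute elementwise by construction.

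For surjectivity, I would show every $X_\alpha$ lies in $B\cdot C_{U_S}(B)$. For $\alpha\in\{2\e_1\}\cup\{\e_1-\e_j\mid 2\le j\le n\}$ this is trivial. For the remaining roots $\alpha$ in the complement $\Phi'$, each formula from~(\ref{x-w-def}) and~(\ref{x-w-def1}) has the shape
$$ A_\alpha\ =\ X_\alpha\,u_\alpha\ +\ \textstyle\sum_\beta X_\beta\,b_\beta\ +\ b_0, $$
with $u_\alpha$ a unit in $B$, all $b_\beta,b_0\in B$, and every $\beta$ appearing in the sum having strictly larger $\e_1$-coefficient than $\alpha$; this is a direct inspection of the seven cases (covering $\e_k+\e_1$, $\e_i-\e_j$, $\e_k+\e_l$, $\e_k-\e_1$, $-\e_k-\e_l$, $-\e_1-\e_k$, $-2\e_1$). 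Solving these equations in order of decreasing $\e_1$-coefficient — so that each $X_\beta$ on the right has already been processed — inductively expresses each $X_\alpha$ in $B\cdot C'$, where $C'\subseteq C_{U_S}(B)$ is the subalgebra generated by the $A_\alpha$'s.

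For injectivity, I would use the filtration $F'_\bullet$ on $U_S$ given by the number of $X_\alpha$-factors with $\alpha\in\Phi'$ (so $F'_0=B$). Since commutators of root vectors yield scalars, elements of $\mh_n$, or single root vectors, the associated graded $\text{gr}^{F'}U_S$ is a free right $B$-module with basis $\{\bar X^I:=\prod_{\alpha\in\Phi'}\bar X_\alpha^{n_\alpha}\}$, and the images $\bar A_\alpha\in \text{gr}^{F'}_1 U_S$ inherit the same triangular shape $\bar A_\alpha=\bar X_\alpha\,u_\alpha+\sum_\beta \bar X_\beta\,b_\beta$, giving an upper-triangular transition matrix over $B$ with units on the diagonal. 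Hence $\{\bar A^I\}$ is also a free right $B$-basis of $\text{gr}^{F'}U_S$, and by a standard filtered lift $\{A^I:=\prod A_\alpha^{n_\alpha}\}$ is a free right $B$-basis of $U_S$. Given any $c\in C_{U_S}(B)$ write uniquely $c=\sum_I A^I b_I$; the identity $[c,b]=0$ together with $[A^I,b]=0$ forces $[b_I,b]=0$ for every $b\in B$, so $b_I\in Z(B)$. A direct calculation using the skew-commutation relations of $B$ gives $Z(B)=\C$, so $C_{U_S}(B)=C'$ and $\mu$ is bijective.

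For the center, with $U_S\cong B\otimes_\C C_{U_S}(B)$ and $Z(B)=\C$, an element $z\in U_S$ is central iff $z\in C_{U_S}(B)$ and $z$ commutes with $C_{U_S}(B)$, equivalently $z\in Z(C_{U_S}(B))$. The main obstacle I anticipate is the triangularity step — carefully writing out the leading symbols $\bar A_\alpha$ and confirming the multi-index upper-triangularity over $B$ — after which the rest of the proof is formal bookkeeping combined with Lemma~\ref{comm-lemm}.
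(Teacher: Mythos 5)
Your proposal is correct and follows essentially the paper's strategy: surjectivity by inverting the triangular system (\ref{x-w-def})--(\ref{x-w-def1}) (the paper simply records the explicit inverses as (\ref{x-def2})--(\ref{x-def3}) rather than inducting on the $\e_1$-coefficient), and the center statement via the triviality of $Z(B)$. Where you genuinely diverge, and add value, is the injectivity step. The paper only observes that $\gamma$ sends a monomial in the $A_\alpha$'s and the generators of $B$ to a polynomial in the $X_\alpha$'s with nonzero top-degree term; taken literally this proves injectivity of $\gamma$ on $B\otimes C'$, where $C'$ is the subalgebra generated by the $A_\alpha$'s, and leaves implicit why $C_{U_S}(B)$ is no larger than $C'$ (the paper extracts the spanning statement only afterwards, as Corollary \ref{PBW-TH}, as a consequence of the theorem). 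Your route closes that loop cleanly: the upper-triangularity of the symbols $\bar A_\alpha$ over $B$ --- which does check out in all seven families, with the correction terms $X_\beta$ having strictly larger $\e_1$-coefficient and the $u_\alpha$ genuine units of $B$ --- makes $\{A^I\}$ a free right $B$-basis of $U_S$, and then writing $c=\sum_I A^I b_I$ with $[c,B]=0$ and $Z(B)=\C$ forces $b_I\in\C$, so $C_{U_S}(B)=\operatorname{span}\{A^I\}=C'$ and bijectivity of $\mu$ follows. The one computation you defer, the triangularity verification, is exactly the content of the formulas (\ref{x-def2})--(\ref{x-def3}), so nothing essential is missing.
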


\begin{proof}
From (\ref{x-w-def}), it follows that
\begin{equation}\label{x-def2}
 \aligned
 X_{\e_k+\e_1}
 &=X_{\e_1-\e_k}^{-1}X_{2\e_1}A_{\e_k+\e_1}
 +h_kX_{\e_1-\e_k}^{-1}X_{2\e_1},\\
 X_{\e_k+\e_l}&=X_{\e_1-\e_k}^{-1}X_{\e_1-\e_l}^{-1}X_{2\e_1}A_{\e_k+\e_l}
+X_{\e_1+\e_l}h_kX_{\e_1-\e_k}^{-1}\\
& \ \ \ +X_{\e_1+\e_k}h_lX_{\e_1-\e_l}^{-1}
-h_l(h_k-\delta_{kl})X_{\e_1-\e_k}^{-1}X_{\e_1-\e_l}^{-1}X_{2\e_1},\\
 X_{\e_i-\e_j}&=X_{\e_1-\e_i}^{-1}X_{\e_1-\e_j}A_{\e_i-\e_j}
 +h_iX_{\e_1-\e_i}^{-1}X_{\e_1-\e_j},\ \   i\neq j, \\
X_{\e_k-\e_1}&=A_{\e_k-\e_1}X_{\e_1-\e_k}^{-1}-
 \sum_{j=2, j\neq k }^n X_{\e_k-\e_j}(h_j-1)X_{\e_1-\e_j}^{-1}\\
 & \ \  -h_k(h_k-1)X_{\e_1-\e_k}^{-1}+X_{\e_k+\e_1}(I_n+2)X_{2\e_1}^{-1},
 \endaligned\end{equation}
 and
 \begin{equation}\label{x-def3}
 \aligned
 X_{-\e_k-\e_l}
&=A_{-\e_k-\e_l}X_{\e_1-\e_k}X_{\e_1-\e_l}X_{2\e_1}^{-1},\\
 X_{-\e_1-\e_k}&=
X_{\e_1-\e_k}X_{2\e_1}^{-1}A_{-\e_1-\e_k}
-\sum_{l=2}^n X_{-\e_k-\e_l}h_lX_{\e_1-\e_l}^{-1}
-I_nX_{\e_1-\e_k}X_{2\e_1}^{-1},\\
X_{-2\e_1}
&= A_{-2\e_1}X_{2\e_1}^{-1}
-\sum_{k=2}^n2X_{-\e_1-\e_k}h_kX_{\e_1-\e_k}^{-1}+\sum_{k=2}^nX_{-2\e_k}h_kX_{\e_1-\e_k}^{-2}\\
&\ \ \ -\sum_{k,l=2}^n X_{-\e_k-\e_l}h_kh_lX_{\e_1-\e_k}^{-1}X_{\e_1-\e_l}^{-1}
-(I_n+2n)I_nX_{2\e_1}^{-1}.
\endaligned\end{equation}
Observing these formulas, we see that $U_S\subseteq BC_{U_S}(B)$. Hence
$U_S=BC_{U_S}(B)$.

Since $B$ and $C_{U_S}(B)$ are subalgebras of $U_S$, we have a
natural homomorphism $\gamma$ from $B\otimes C_{U_S}(B)$ to $U_S$.
By (\ref{x-def2}) and (\ref{x-def3}), $\gamma$ is surjective. Observe that
$\gamma$ maps a monomial on $$A_{\e_k+\e_1},
A_{\e_k+\e_l}, A_{\e_i-\e_j}, A_{\e_k-\e_1}, A_{-2\e_1},
A_{-\e_1-\e_k},
A_{-\e_l-\e_k}, h_i, X_{\e_1-\e_i}, X_{2\e_1},$$
to a polynomial on $$X_{\e_k+\e_1},
X_{\e_k+\e_l}, X_{\e_i-\e_j}, X_{\e_k-\e_1}, X_{-2\e_1},
X_{-\e_1-\e_k},
X_{-\e_l-\e_k}, h_i, X_{\e_1-\e_i}, X_{2\e_1},$$
whose highest degree term is nonzero. So   $\gamma$ is also injective, and hence it is bijective. Since the center of $B$ is trivial, $Z(U_{S})\cong Z(C_{U_S}(B))$.
\end{proof}

By Theorem \ref{main-iso-th} and the PBW Theorem on $U_S$,  we have the following result.

\begin{corollary}\label{PBW-TH}
  All the ordered monomials in $$A_{\e_k+\e_1}, A_{\e_k+\e_l}, A_{\e_i-\e_j} (i\neq j),
A_{\e_k-\e_1},
A_{-2\e_1}, A_{-\e_1-\e_k}, A_{-\e_l-\e_k},$$ with $i, j,k,l=2,\dots, n $ form a basis of $C_{U_S}(B)$
over $\C$.
\end{corollary}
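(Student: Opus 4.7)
The plan is to derive the corollary from Theorem \ref{main-iso-th} together with the PBW theorem for $U(\sp_{2n})$ localized at $S$. Fix a total order on $\Phi$ and on $\{h_1,\dots,h_n\}$. Since each generator of $S$ is a root vector whose $\mathrm{ad}$-action on $U(\sp_{2n})$ is locally nilpotent, the PBW basis of $U(\sp_{2n})$ extends to a basis of $U_S$ consisting of ordered monomials in the $X_\alpha$ and the $h_i$, with negative exponents allowed on $X_{2\e_1}$ and on $X_{\e_1-\e_j}$ for $j=2,\dots,n$. I would arrange the order so that every such basis element factors as $b\cdot c$, with $b$ a PBW monomial in the generators of $B$ (i.e.\ in $h_i$, $X_{\e_1-\e_j}^{\pm 1}$, $X_{2\e_1}^{\pm 1}$) and $c$ a monomial in the remaining ``non-$B$'' root vectors.

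Next I would apply the formulas (\ref{x-def2})--(\ref{x-def3}) to rewrite each non-$B$ root vector $X_\alpha$ as $A_\alpha$ times an explicit monomial in $B$, plus correction terms. A direct inspection shows the substitutions can be arranged in the following stages:
\begin{equation*}
\{X_{\e_k+\e_1},\ X_{\e_i-\e_j}\ (i,j\ge 2),\ X_{-\e_k-\e_l}\}\ \longrightarrow\ \{X_{\e_k+\e_l},\ X_{\e_k-\e_1},\ X_{-\e_1-\e_k}\}\ \longrightarrow\ X_{-2\e_1},
\end{equation*}
so that at each step every correction term involves only elements of $B$ together with non-$B$ root vectors that have already been replaced. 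Iterating produces a new spanning set of $U_S$ whose elements are products of an ordered monomial in the $A_\alpha$'s with a PBW monomial in $B$.

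Each $A_\alpha$ lies in $C_{U_S}(B)$ by Lemma \ref{comm-lemm}, and $B\otimes C_{U_S}(B)\cong U_S$ by Theorem \ref{main-iso-th}; combining these with the previous paragraph forces the ordered $A$-monomials to span $C_{U_S}(B)$. For linear independence, I would use the same leading-term argument that appears at the end of the proof of Theorem \ref{main-iso-th}: after passing to the associated graded with respect to an appropriate filtration on $U_S$ (for instance, the standard degree filtration extended to the Ore localization, or the Kazhdan filtration), the leading symbol of an ordered $A$-monomial is the corresponding ordered $X$-monomial multiplied by an invertible symbol coming from $B$, so distinct ordered $A$-monomials produce distinct leading $X$-symbols, which are linearly independent by PBW for $\mathrm{gr}(U_S)$. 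The main obstacle is purely bookkeeping: confirming that the three-stage order above really is triangular (no correction term in (\ref{x-def2})--(\ref{x-def3}) introduces a non-$B$ root vector later in the sequence than the one being replaced) and fixing a filtration in which the top degree of each $A_\alpha$ is captured exactly by its defining expression; once these are in place, both spanning and linear independence follow from PBW for $U_S$.
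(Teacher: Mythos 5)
Your proposal is correct and follows essentially the same route as the paper, which derives the corollary in one line from Theorem \ref{main-iso-th} together with the PBW theorem on $U_S$; your three-stage triangular substitution and leading-symbol argument is just a careful unpacking of that citation (and your staging of the formulas (\ref{x-def2})--(\ref{x-def3}) does check out). The only detail worth making explicit is that after substitution the $A_\alpha$'s appear in arbitrary order and must be reordered using an induction on the filtration degree, since their mutual commutators lie in $C_{U_S}(B)$ but are not a priori in the span of ordered monomials; your filtration/leading-term argument absorbs this.
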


The PBW type Theorem for any finite $W$-algebra was given in
\cite{Pr1}.

\subsection{Isomorphism between $W(\sp_{2n}, e)$ and $C_{U_S}(B)$}

The following theorem says that all
Whittaker vectors in the module $Q^\sigma_{e}$ can be  given by using elements in $C_{U_S}(B)$.
This result is motivated by  the classical Kostant's Theorem, see \cite{K}.

\begin{theorem} \label{W-iso}We have the isomorphism $W(\sp_{2n}, e)\cong C_{U_S}(B)$.
\end{theorem}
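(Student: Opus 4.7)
The plan is to realize both $W(\sp_{2n},e)$ and $C_{U_S}(B)$ as the same space of Whittaker vectors $\mathrm{wh}_{\b1}(Q_e^\sigma)$. By the preceding Skryabin-type lemma together with $W^\sigma(\sp_{2n},e)\cong W(\sp_{2n},e)$, the algebra $W(\sp_{2n},e)$ is identified with $\mathrm{wh}_{\b1}(Q_e^\sigma)$ via $g\mapsto g(v_{\b1})$, where multiplication is given by $(u_1 v_{\b1})\cdot(u_2 v_{\b1})=u_1 u_2 v_{\b1}$. It therefore suffices to construct an algebra isomorphism $\Phi:C_{U_S}(B)\to\mathrm{wh}_{\b1}(Q_e^\sigma)$ defined by $\Phi(u)=u\cdot v_{\b1}$.

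Since $X_{2\e_1}$ and $X_{\e_1-\e_j}$ act bijectively on $Q_e^\sigma$, the $U(\sp_{2n})$-action extends to a $U_S$-action with $Q_e^\sigma=U_S v_{\b1}$. For $u\in C_{U_S}(B)$ and any generator $X\in\{X_{2\e_1},X_{\e_1-\e_j}\}$, one has $X\cdot u v_{\b1}=u\cdot X v_{\b1}=u v_{\b1}$, so $\Phi(u)$ is Whittaker. That $\Phi$ is an algebra homomorphism is then clear: $\Phi(u_1 u_2)=u_1 u_2 v_{\b1}=\Phi(u_1)\cdot\Phi(u_2)$.

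The crux is bijectivity. Theorem \ref{main-iso-th} gives $U_S\cong B\otimes C_{U_S}(B)$, and the annihilator of $v_{\b1}$ in $U_S$ is $U_S\cdot J$, where $J$ is the left ideal of $B$ generated by $X_{2\e_1}-1,X_{\e_1-\e_2}-1,\dots,X_{\e_1-\e_n}-1$; this corresponds to $J\otimes C_{U_S}(B)$ under the tensor decomposition. A short computation using the mutual commutativity of $X_{2\e_1},X_{\e_1-\e_2},\ldots,X_{\e_1-\e_n}$ shows that $B/J$ has $\C$-basis $\{[h_1^{a_1}\cdots h_n^{a_n}]:a\in\Z^n_{\geq 0}\}$, yielding a vector-space identification $Q_e^\sigma\cong\C[h_1,\ldots,h_n]\otimes C_{U_S}(B)$ under which $v_{\b1}\leftrightarrow 1\otimes 1$. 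The commutation relations $[h_i,X_{2\e_1}]=2\delta_{i1}X_{2\e_1}$ and $[h_i,X_{\e_1-\e_j}]=(\delta_{i1}-\delta_{ij})X_{\e_1-\e_j}$ translate the action of $X_{2\e_1}$ on this identification into the shift $p(h_1)\mapsto p(h_1-2)$ on the first factor (and the identity on the second), and $X_{\e_1-\e_j}$ into the shift $(h_1,h_j)\mapsto(h_1-1,h_j+1)$. Writing a Whittaker vector as $v=\sum_\alpha p_\alpha(h)\otimes c_\alpha$ in a basis $\{c_\alpha\}$ of $C_{U_S}(B)$, the Whittaker conditions force each $p_\alpha$ to be invariant under all these shifts, hence a constant, so $v=u v_{\b1}$ with $u\in C_{U_S}(B)$. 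This gives surjectivity; injectivity is immediate since $u v_{\b1}\leftrightarrow 1\otimes u$ in the tensor decomposition.

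The main obstacle will be pinning down the explicit vector-space picture $Q_e^\sigma\cong\C[h_1,\ldots,h_n]\otimes C_{U_S}(B)$ with $B$ acting on the first factor by the polynomial shifts above---this rests entirely on Theorem \ref{main-iso-th}. Once this structural description is in place, the Whittaker conditions reduce to shift-invariance of polynomials in finitely many variables, which yields constancy by a routine argument and closes the proof.
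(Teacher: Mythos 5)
Your proposal is correct and follows essentially the same route as the paper: both identify $W(\sp_{2n},e)$ with $W^\sigma(\sp_{2n},e)$, send $u\in C_{U_S}(B)$ to the endomorphism determined by $u\cdot v_{\b1}$, get injectivity from the decomposition $U_S\cong B\otimes C_{U_S}(B)$ of Theorem \ref{main-iso-th} (equivalently $Q_e^\sigma\cong U(\mh_n)\otimes C_{U_S}(B)$ as a vector space), and prove surjectivity by showing the $U(\mh_n)$-coefficients of a Whittaker vector must be constants. Your ``shift-invariance of polynomials'' phrasing is just a repackaging of the paper's explicit computation with $(X_{2\e_1}-1)^{m_1}h_1^{m_1}v_{\b1}$ and $(X_{\e_1-\e_i}-1)^{m_i}h_i^{m_i}v_{\b1}$, so no genuinely new ingredient is involved.
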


\begin{proof}Since $W^\sigma(\sp_{2n}, e)\cong W(\sp_{2n}, e)$,
it suffices to show that $W^\sigma(\sp_{2n}, e)\cong C_{U_S}(B)$.
Since $X_{\e_1-\e_2}-1, \dots,X_{\e_1-\e_n}-1, X_{2\e_1}-1$ act locally nilpotently on $Q^\sigma_{e}$,  $Q^\sigma_{e}$ can be extended to
a $U_S$-module.
Recall that $W^\sigma(\sp_{2n},e)= \text{End}_{\sp_{2n}}(Q^\sigma_{e} )^{\text{op}}$. Since $Q^\sigma_{e}$ is generated by
$v_\mathbf{1}$, any $\eta\in W^\sigma(\sp_{2n},e)$ is determined by $\eta(v_{\b1})$.
Moreover $\eta(v_{\b1})\in \mathrm{wh}_{\b1}(Q^\sigma_{e})$. By the definition of $C_{U_S}(B)$, any
$u\in C_{U_S}(B)$ can define a $\eta_u\in W^\sigma(\sp_{2n},e)$ such that $\eta_u(v_{\b1})=u\cdot v_{\b1}$. That is, there is a homomorphism
$$\Phi: C_{U_S}(B)\rightarrow \text{End}_{\sp_{2n}}(Q^\sigma_{e} )^{\text{op}}, u\mapsto \eta_{u}.$$
Note that as a vector space $Q^\sigma_{e}\cong U(\mh_n)\otimes C_{U_S}(B)$.
By the definition of $Q^\sigma_{e}$ and decomposition
$U_S\cong C_{U_S}(B)\otimes B$, it follows that $\Phi$ is injective.

 Conversely for any  $\eta\in W^\sigma(\sp_{2n},e)$, since $Q^\sigma_{e}=U_S \cdot v_\b1$,
we can suppose that
$$\eta(v_1)=\sum_{0\leq \br\leq \bm}h^{\br}u_\br \cdot v_{\b1},$$where
$u_\br\in C_{U_S}(B), h^{\br}=h_1^{r_1}\dots h_n^{r_n}$, and $``\leq"$ is the usual
lexicographical order.
For any $i\in \{2,\dots, n\}$, since $[h_1,X_{2\e_1}]=2X_{2\e_1}, [h_i,X_{\e_1-\e_i}]=-X_{\e_1-\e_i}$, this yields
$$X_{2\e_1}h_1^{m_1}=(h_1-2)^{m_1}X_{2\e_1},\ \  X_{\e_1-\e_i}h_i^{m_i}=(h_i+1)^{m_i}X_{\e_1-\e_i},$$
and $$[X_{2\e_1},h_1^{m_1}]=\sum_{j=1}^{m_1}\binom{m_1}{j}(-2)^jh_1^{m_1-j}X_{2\e_1},\ \  [X_{\e_1-\e_i},h_i^{m_i}]
=\sum_{j=1}^{m_1}\binom{m_1}{j}h_i^{m_i-j}X_{\e_1-\e_i}.$$
Then by induction on $m_1$ and $m_i$,  we can find nonzero
 $k_{m_1}, k_{m_i}\in \C$ such that
$$(X_{2\e_1}-1)^{m_1}h_1^{m_1}v_{\b1}=k_{m_1}v_{\b1},  \ \ (X_{2\e_1}-1)^{s_1}h_1^{m_1}v_{\b1}=0, \forall\ s_1>m_1,$$
and  $$(X_{\e_1-\e_i}-1)^{m_i}h_i^{m_i}v_{\b1}=k_{m_i}v_{\b1},  \ \ (X_{\e_1-\e_i}-1)^{s_i}h_i^{m_i}v_{\b1}=0, \forall\ s_i>m_i.$$
Consequently
$$\aligned  u_{\bm}v_{\b1}=&\frac{1}{k_{m_1}\dots k_{m_n}} (X_{\e_1-\e_n}-1)^{m_n}\dots (X_{\e_1-\e_2}-1)^{m_2}(X_{2\e_1}-1)^{m_1}\eta(v_1)=0.\endaligned$$
Similarly we have that $u_{\br}v_{\b1}=0$ for any $0< \br\leq \bm$.
So $\eta(v_1)=u_0\cdot v_{\b1}$ for some $u_0\in C_{U_S}(B)$. Therefore
$\Phi$ is also surjective, and hence it is isomorphic.
\end{proof}

Let $D_n^{\p}$ be the localization of $D_n$ with respect to the Ore subset
$$\{\p_1^{i_1}\dots\p_n^{i_n}\mid i_1,\dots,i_n\in \Z_{\geq 0}\}.$$ We can verify the following map is an algebra isomorphism:
$$\aligned D_n^{\p}&\rightarrow B,\\
 \p_1&\mapsto X_{2\e_1},\\
 t_1&\mapsto -\frac{1}{2}(h_1+h_2+\dots+h_n )X_{2\e_1}^{-1},\\
 \p_i&\mapsto X_{\e_1-\e_i},\\
 t_i&\mapsto h_i X_{\e_1-\e_i}^{-1}, i=2,\dots,n.\\
  \endaligned$$

By Theorems \ref{main-iso-th} and \ref{W-iso}, we have the following
tensor product decomposition.

\begin{theorem}\label{refine} There is an algebra isomorphism $U_S\cong D^\p_n\otimes W(\sp_{2n},e)$.
\end{theorem}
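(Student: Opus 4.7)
The plan is to assemble Theorem \ref{refine} from three ingredients: the centralizer-type decomposition $U_S \cong B \otimes C_{U_S}(B)$ of Theorem \ref{main-iso-th}, the identification $W(\sp_{2n}, e) \cong C_{U_S}(B)$ of Theorem \ref{W-iso}, and the explicit algebra isomorphism $D_n^{\p} \to B$ described right before the theorem. Once these three isomorphisms are combined, the theorem follows by transport of structure. So the only real work is to verify that the displayed map from $D_n^{\p}$ to $B$ is an algebra isomorphism.

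To verify that the map $D_n^{\p} \to B$ is well-defined, I would check the defining Weyl algebra relations on generators. The relations $[\p_i, \p_j] = 0$ reduce to $[X_{2\e_1}, X_{\e_1 - \e_i}] = 0$ and $[X_{\e_1-\e_i}, X_{\e_1-\e_j}] = 0$, which hold because neither $2\e_1 + (\e_1 - \e_i)$ nor $(\e_1 - \e_i) + (\e_1 - \e_j)$ is a root of $\sp_{2n}$. The cross relations $[\p_i, t_j] = \delta_{ij}$ split into several cases: for $i = j = 1$, one computes $[X_{2\e_1}, -\tfrac{1}{2}(h_1 + \cdots + h_n)X_{2\e_1}^{-1}] = -\tfrac{1}{2}[X_{2\e_1}, h_1]X_{2\e_1}^{-1} = 1$ using $[X_{2\e_1}, h_1] = -2X_{2\e_1}$ and that $X_{2\e_1}$ commutes with $h_2, \dots, h_n$; for $i = j \geq 2$, one uses $[X_{\e_1-\e_i}, h_i] = X_{\e_1-\e_i}$ to get $[X_{\e_1-\e_i}, h_i X_{\e_1-\e_i}^{-1}] = 1$; the off-diagonal cases $i \neq j$ vanish because either $(\e_1 - \e_i)(h_j) = 0$ or the root vectors commute. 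The relations $[t_i, t_j] = 0$ follow similarly from these commutation patterns, carefully tracking how $h_k$ moves past each $X_{\e_1-\e_l}^{-1}$.

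For bijectivity, surjectivity is immediate by inversion: $X_{2\e_1}$, $X_{\e_1-\e_i}$ and their inverses are directly in the image, and each $h_i$ can be recovered as $t_i \p_i$ (for $i \geq 2$) and then $h_1$ from $-2 t_1 \p_1 - (h_2 + \cdots + h_n)$, so all generators of $B$ are hit. Injectivity can be read off from a PBW/filtration argument: both $D_n^{\p}$ and $B$ are iterated Ore localizations of polynomial rings in $2n$ variables with identical associated graded structure, so the ordered monomials in $\p_1^{\pm 1}, \dots, \p_n^{\pm 1}, t_1, \dots, t_n$ map to a $\C$-linearly independent set of ordered monomials in $X_{2\e_1}^{\pm 1}, X_{\e_1-\e_i}^{\pm 1}, h_1, \dots, h_n$ (which are independent by the PBW theorem for $U_S$).

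Combining everything gives $U_S \cong B \otimes C_{U_S}(B) \cong D_n^{\p} \otimes W(\sp_{2n}, e)$, proving the theorem. The main obstacle is simply bookkeeping: there is no conceptual difficulty, but one has to pick the correct normalization for $t_1$ (the factor $-\tfrac{1}{2}$ and the sum $h_1 + \cdots + h_n$ rather than $h_1$ alone) so that $[\p_1, t_1] = 1$ comes out cleanly despite $X_{2\e_1}$ commuting with $h_2, \ldots, h_n$; once this normalization is fixed the rest of the verification is routine.
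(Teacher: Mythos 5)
Your proposal is correct and follows the paper's route exactly: the paper also obtains the theorem by combining Theorem \ref{main-iso-th} ($U_S\cong B\otimes C_{U_S}(B)$), Theorem \ref{W-iso} ($W(\sp_{2n},e)\cong C_{U_S}(B)$), and the displayed isomorphism $D_n^{\p}\cong B$, whose verification the paper leaves to the reader and you carry out correctly (including the normalization $t_1\mapsto -\tfrac12 I_n X_{2\e_1}^{-1}$ forced by $[X_{2\e_1},h_1]=-2X_{2\e_1}$).
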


\begin{remark}
In \cite{Pr2}, Premet showed that the localization $U_f$ of $U(\sp_{2n})$ with respect to a long root vector
is isomorphic to a subalgebra of $D^\p_n\otimes W(\sp_{2n},e)$.
Our result can be viewed as a refinement of Premet's result.
On minimal nilpotent finite $W$-algebras of type $A_n$, it was also shown that a suitable localization of
$U(\sl_{n+1})$ is isomorphic to $D^\p_n\otimes W(\sl_{n+1},e)$ in \cite{LL}.
 It would be an interesting problem to extend these tensor product decompositions to other type minimal nilpotent finite $W$-algebras.
\end{remark}

\begin{remark}\label{equi-remark} By Theorem \ref{W-iso}, we can identify $C_{U_S}(B)$ with $W(\sp_{2n},e)$. Theorems \ref{main-iso-th}, \ref{W-iso} and \ref{refine} make the equivalence between $\widetilde{\ch}_{\b1}$ and the category of  $W(\sp_{2n},e)$-modules very natural. Since $[C_{U_S}(B), \mm_n]=0$,
 $\mathrm{wh}_{\b1}(M)$ is a $C_{U_S}(B)$-module for any $M\in \widetilde{\ch}_{\b1}$. Conversely for a
 $C_{U_S}(B)$-module $V$, let
 $$X_{\e_1-\e_2}v=\dots=X_{\e_1-\e_n}v=X_{2\e_1}v=v,$$ for any
 $v\in V$, consider the induced module $U_S\otimes_{U(\mm_n)_S C_{U_S}(B)} V$. Then
 the functors $$M\mapsto \mathrm{wh}_{\b1}(M),
\ \ \ \  V\mapsto U_S\otimes _{U(\mm_n)_S C_{U_S}(B)} V, $$ are inverse equivalence
 between $\widetilde{\ch}_{\b1}$ and the
category of $C_{U_S}(B)$-modules.
\end{remark}

\subsection{One dimensional representations of $W(\sp_{2n},e)$}

By Corollary 4.1 in \cite{Pr2}, $C_{U_S}(B)$ has only one  ideal of
codimension $1$, i.e.,  $C_{U_S}(B)$ has unique one dimensional module. From the following example of a Whittaker module over $\sp_{2n}$,
we can find this one dimensional module.

\begin{example}\label{whittaker-example} Let $\tau$ be the automorphism of $D_n$ such that
$$ t_i\mapsto t_i+\delta_{i1},\ \  \p_i\mapsto \p_i+1- \delta_{i1}, \  i=1,\dots, n.$$
The $D_n$-module $P=:(\C[t_1^{\pm 1}]/ \C[t_1])\otimes \C[t_2,\dots,t_n]$
can be twisted by $\tau$ to be a new $D_n$-module $P^{\tau}$. Under the map $\phi$ in Propostion \ref{D-map}, $P^{\tau}$ becomes an $\sp_{2n}$-module.
 The image of $t_1^{-1}$ in $P$ is also denoted by
 $t_1^{-1}$. Note that
 $$t_1\cdot t_1^{-1}=\p_i\cdot t_1^{-1}=t_1^{-1}, \ \text{for all}\  2\leq i \leq n.$$
Then it can be verified that the $\sp_{2n}$-submodule $P_1^\tau$ of $P^\tau$
generated by $t_1^{-1}$ is an irreducible module in $\widetilde{\ch}_{\b1}$ with $\dim\mathrm{wh}_{\b1}(P_1^\tau)=1$.
\end{example}

The following result gives an explicit characterization of the unique one dimensional $C_{U_S}(B)$-module using the  generators in Lemma \ref{comm-lemm}.
\begin{proposition} \label{one-dim}The unique  one dimensional $C_{U_S}(B)$-module $V_{-\frac{1}{2}}$ is
defined by the map:
$$\aligned
&A_{\e_k+\e_1}\mapsto -\frac{1}{2}, \ \  A_{\e_k+\e_l}\mapsto \frac{1}{4}-\frac{\delta_{lk}}{2},\ \  A_{-\e_k-\e_l}\mapsto -1,\\
&A_{\e_i-\e_j}\mapsto- \frac{1}{2}, \ \  A_{\e_k-\e_1}\mapsto -\frac{1}{4},
\ \  A_{-\e_k-\e_1}\mapsto -\frac{1}{2}-n,
\\
&A_{-2\e_1}\mapsto -n^2+\frac{1}{4}. \\
\endaligned$$
\end{proposition}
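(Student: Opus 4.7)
The plan is to identify the unique one-dimensional $C_{U_S}(B)$-module with the Whittaker space of the module in Example \ref{whittaker-example} and then compute each scalar directly. By the isomorphism $W(\sp_{2n},e)\cong C_{U_S}(B)$ of Theorem \ref{W-iso} and Corollary 4.1 of \cite{Pr2}, the algebra $C_{U_S}(B)$ has exactly one one-dimensional representation, so it suffices to pin it down. By Remark \ref{equi-remark}, any object $M\in\widetilde{\ch}_{\b1}$ with $\dim\mathrm{wh}_{\b1}(M)=1$ yields this representation as $\mathrm{wh}_{\b1}(M)$; Example \ref{whittaker-example} supplies precisely such an $M=P_1^\tau$ with Whittaker vector $v:=t_1^{-1}$.

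For each generator $A_\alpha$ in Lemma \ref{comm-lemm}, I would then compute $A_\alpha\cdot v$ via the action $x\cdot w=\tau(\phi(x))w$ of Proposition \ref{D-map}. Two reductions keep the calculation short. First, since $X_{2\e_1}\cdot v=X_{\e_1-\e_k}\cdot v=v$ for all $k\geq 2$, the inverses also act as $1$ on $v$, so every rightmost occurrence of $X_{2\e_1}^{\pm 1}$ or $X_{\e_1-\e_k}^{\pm 1}$ in the defining formulas (\ref{x-w-def})--(\ref{x-w-def1}) may simply be erased when acting on $v$. Second, as $A_\alpha$ commutes with $B$, the image $A_\alpha\cdot v$ is itself a Whittaker vector and therefore equals $c_\alpha v$ by one-dimensionality of $\mathrm{wh}_{\b1}(P_1^\tau)$; the scalar $c_\alpha$ is then read off as the coefficient of $t_1^{-1}$ in the expansion inside $P=(\C[t_1^{\pm 1}]/\C[t_1])\otimes\C[t_2,\dots,t_n]$, and the automatic vanishing of all other terms provides a built-in consistency check.

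Using $\tau(\phi(h_1))=t_1\p_1+\p_1+\tfrac{1}{2}$ and $\tau(\phi(h_k))=t_k\p_k+t_k+\tfrac{1}{2}$ for $k\geq 2$, together with $\p_it_1^{-1}=0$ for $i\geq 2$ and the vanishing of $t_1^a\otimes p$ in $P$ for $a\geq 0$, one finds $h_k\cdot v=t_kv+\tfrac{1}{2}v$ for $k\geq 2$ and $h_1\cdot v=-\tfrac{1}{2}v-t_1^{-2}$. Combined with $\phi(X_{\e_k+\e_l})=t_kt_l$, $\phi(X_{\e_i-\e_j})=t_i\p_j$ and $\phi(X_{-\e_i-\e_j})=-\p_i\p_j$, the values $A_{\e_k+\e_1}\mapsto-\tfrac{1}{2}$, $A_{\e_i-\e_j}\mapsto-\tfrac{1}{2}$, $A_{-\e_k-\e_l}\mapsto-1$ and $A_{\e_k+\e_l}\mapsto\tfrac{1}{4}-\tfrac{\delta_{lk}}{2}$ follow in a few lines each, and $A_{\e_k-\e_1}\mapsto-\tfrac{1}{4}$, $A_{-\e_1-\e_k}\mapsto-\tfrac{1}{2}-n$ require slightly longer sums. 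The main obstacle will be $A_{-2\e_1}$, whose defining expression contains five groups of summands involving $X_{-2\e_1}$, $X_{-\e_1-\e_k}$, $X_{-2\e_k}$, $X_{-\e_k-\e_l}$, products of two $h_j$'s, and the correction $(I_n+2n)I_n$; tracking how the many contributions involving $t_1^{-2}$, $t_1^{-3}$, $t_kv$ and mixed $t_kt_lv$ terms cancel to leave only the clean coefficient $-n^2+\tfrac{1}{4}$ is the delicate bookkeeping step, with the $(I_n+2n)I_n$ summand being precisely what supplies the compensating scalar contribution.
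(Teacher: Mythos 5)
Your proposal is correct and follows essentially the same route as the paper: identify the unique one-dimensional module with $\mathrm{wh}_{\b1}(P_1^\tau)=\C t_1^{-1}$ from Example \ref{whittaker-example} (using $[\mm_n,C_{U_S}(B)]=0$) and evaluate each generator $A_\alpha$ on $t_1^{-1}$ via $\tau\circ\phi$; the paper compresses this to ``by direct computations,'' while you spell out the reductions (rightmost localized factors acting as $1$, reading off the coefficient of $t_1^{-1}$), and your sample evaluations of $h_1$, $h_k$ check out.
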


\begin{proof} By Example \ref{whittaker-example}, the module $P^\tau_1$
satisfies  that $\mathrm{wh}_{\b1}(P^\tau_1)=\C t_1^{-1}$. Since
$[\mm_n, C_{U_S}(B)]=0$, $\mathrm{wh}_{\b1}(P^\tau_1)$ is a one dimensional $C_{U_S}(B)$-module.
By direct computations, we can obtain  the action of $C_{U_S}(B)$ on $\mathrm{wh}_{\b1}(P^\tau_1)$ as follows:
$$\aligned  A_{\e_k+\e_1} t_1^{-1} &= -\frac{1}{2}t_1^{-1},
\ \ A_{\e_k+\e_l} t_1^{-1}=\frac{1}{2}(\frac{1}{2}-\delta_{kl})t_1^{-1},\\
A_{\e_k-\e_1} t_1^{-1}&=-\frac{1}{4}t_1^{-1},
\ \  A_{\e_i-\e_j} t_1^{-1} =-\frac{1}{2}t_1^{-1},\\
A_{-\e_k-\e_l} t_1^{-1}&= -t_1^{-1},
\ \  A_{-\e_1-\e_k}t_1^{-1}=(-\frac{1}{2}-n)t_1^{-1},\\
A_{-2\e_1} t_1^{-1}
&=(-n^2+\frac{1}{4})t_1^{-1}.
\endaligned$$
\end{proof}

\begin{remark}One dimensional representations of a finite $W$-algebra $W(\mg,e)$ play an  important role on the theory of $W(\mg,e)$, since the images of their annihilators under the Skryabin
equivalence are all completely prime primitive ideals of $U(\mg)$, and therefore play a key role in Joseph's theory
of Goldie rank polynomials \cite{L2}. The  completely prime primitive ideal of $U(\sp_{2n})$ corresponding to the module Proposition \ref{one-dim} is referred to as the Joseph ideal, see \cite{Pr2}.
\end{remark}

\section{Equivalence between the category of cuspidal modules over $\sp_{2n}$  and $\mathbf{fmod}$ over $W(\sp_{2n},e)$}

In this section, we use Theorem \ref{main-iso-th} to study the relationship between weight modules over $\sp_{2n}$ and
finite-dimensional modules over the minimal nilpotent finite $W$-algebra $W(\sp_{2n},e)$.
\subsection{Block decomposition}

Let $Z$ be the center of the algebra $U(\sp_{2n})$, and $Z':=\mathrm{Hom}(Z,\C)$ be set of central characters. By $\chi_{\lambda}$ we denote the  central character of the irreducible highest weight module with highest weight $\lambda$.
By the Harish-Chandra's Theorem, any central character $\chi$ is equal to $\chi_{\lambda}$ for some $\lambda$.
 A module $M$ is said to
have the  generalized central character $\chi$, if  for any $v\in M$, there is
some positive integer $k$ such that $(z-\chi(z))^kv = 0$.

Let $\mathcal{C}$ be the category of cuspidal modules over $\sp_{2n}$. For each $\ba\in \C^n$, let $\mathcal{C}_\ba$ be the full subcategory of
$\mathcal{C}$ consisting of modules $M$ such that $\mathrm{supp}(M)\subseteq [\ba]:=\ba+\mathbf{B}$, where
 $$\mathbf{B}:=\{\mathbf{b}=(b_1,\dots,b_n)\in\Z^n\mid b_1+\dots+b_n\in 2\Z\}.$$
For any $\chi\in Z'$, let $\mathcal{C}_\ba(\chi)$
be the full subcategory of $\mathcal{C}_\ba$ of modules with
generalized central character $\chi$. Since the action of $Z$ on
cuspidal modules is locally finite, we have the decomposition:
$$\mathcal{C}=\oplus_{\chi\in Z', [\ba]\in\C^n/\mathbf{B}}\mathcal{C}_\ba(\chi).$$

By Theorem \ref{W-iso}, we can use $W$ to denote both $C_{U_S}(B)$ and $W(\sp_{2n},e)$.
Let $W\text{-fmod}$ be the category of finite-dimensional
$W$-modules. By Theorems \ref{main-iso-th} and \ref{W-iso}, we can identify the center of $U_S$ with the center of $W$. For any $\chi\in Z'$,  we can also define the full subcategory $W(\chi)\text{-fmod}$ of $W\text{-fmod}$  of modules with
generalized central character $\chi$.  According to the action of center, we have the decomposition:
$W\text{-fmod}=\oplus_{\chi \in Z'} W(\chi)\text{-fmod}$.

\subsection{The equivalence between $W\text{-fmod}$ and $\mathcal{C}_{\ba}(\chi_\lambda)$}

Let $\ba\in\C^n, \lambda\in\mh_n^*$ such that  $\mathcal{C}_{\ba} (\chi_\lambda)$ is nonempty.  We define the functor $F_{\ba}: \mathcal{C}_{\ba}(\chi_\lambda) \rightarrow W(\chi_\lambda)\text{-fmod}$ such that $F_{\ba}(M)=M_{\ba}$. From $[W, \mh_n]=0$,   we have $W M_{\ba}\subset M_{\ba}$. So $M_\ba \in W(\chi_\lambda)\text{-fmod}$.
 Conversely, for  a $V\in  W(\chi_\lambda)\text{-fmod}$, let each $h_{i}$
 act  on it as the scalar $a_i$.
 Define  $$G_{\ba}(V)=\text{Ind}_{U(\mh_n)W}^{U_S}V=U_{S}\otimes_{U(\mh_n)W} V.$$ It is clear that $G_{\ba}(V)=\C[X_{2\e_1}^{\pm 1}, X_{\e_1-\e_2}^{\pm 1},\dots, X_{\e_1-\e_n}^{\pm 1}]\otimes V$.
  Thus we have a functor $G_{\ba}$ from
$W\text{-fmod}$ to the category of weight modules over $\sp_{2n}$.

\begin{lemma}\label{w-half-lemma} If  $W(\chi_\lambda)$-$\mathrm{fmod}$ is non-empty, then
$W(\chi_\lambda)$-$\mathrm{fmod}$ is equivalent to $W(\chi_{-\frac{\b1}{2}})$-$\mathrm{fmod}$, where $-\frac{\b1}{2}=(-\frac{1}{2},\dots, -\frac{1}{2})$.
\end{lemma}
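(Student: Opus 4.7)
The plan is to transport the question to the category of cuspidal $\sp_{2n}$-modules via the functors $F_{\ba}$ and $G_{\ba}$, and then use a translation functor to match up central character blocks.

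First I would pick, by the hypothesis, a nonzero $V \in W(\chi_\lambda)\text{-fmod}$ and choose $\ba \in \C^n$ to be a weight of $V$ under the (commuting) action of $\mh_n$. Then $G_{\ba}(V) = U_S \otimes_{U(\mh_n)W} V$ is a weight $\sp_{2n}$-module with support in $\ba + \mathbf{B}$ and generalized central character $\chi_\lambda$. The formulas (\ref{x-def2}) and (\ref{x-def3}) express each negative root vector as a product of an element of $W = C_{U_S}(B)$ with integer powers of the bijectively acting $X_{2\e_1}, X_{\e_1-\e_i}$, so every root vector acts injectively on $G_{\ba}(V)$ once $W$ acts injectively on $V$; hence $G_{\ba}(V) \in \mathcal{C}_{\ba}(\chi_\lambda)$, showing that $\mathcal{C}_{\ba}(\chi_\lambda)$ is non-empty.

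Next I would invoke Mathieu's classification of cuspidal modules in type $C$ (or derive the needed integrality directly from the centralizer picture $W \cong C_{U_S}(B)$) to conclude that any cuspidal $\sp_{2n}$-module has a central character of the form $\chi_{-\mathbf{1}/2 + \nu}$ with $\nu \in \Z^n$. Writing $\nu = \lambda + \frac{\b1}{2}$ and choosing a finite-dimensional simple $\sp_{2n}$-module $L$ whose weight spectrum contains $-\nu$, the translation functor $T = \mathrm{pr}_{\chi_{-\mathbf{1}/2}}\bigl((-)\otimes L\bigr)$ yields an equivalence $\mathcal{C}_{\ba}(\chi_\lambda) \simeq \mathcal{C}_{\ba - \nu}(\chi_{-\mathbf{1}/2})$, with inverse built from $L^*$; cuspidality is preserved because $L$ has finitely many weights and the root vectors act through the coproduct, so injectivity on the tensor product reduces to injectivity on a direct sum of shifts of the original module. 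Conjugating $T$ and its inverse by $G$ and $F$ then produces the desired equivalence $W(\chi_\lambda)\text{-fmod} \simeq W(\chi_{-\mathbf{1}/2})\text{-fmod}$.

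The main obstacle should be verifying that the composition $F_{\ba} \circ G_{\ba}$ is naturally isomorphic to the identity on $W(\chi_\lambda)\text{-fmod}$ and that the central-character projection commutes with the $\ba$-weight space functor; both reduce to the tensor decomposition $U_S \cong B \otimes W$ of Theorem \ref{main-iso-th}, which gives $G_{\ba}(V)_{\ba} \cong V$ canonically as $W$-modules. A secondary concern is the reliance on Mathieu's classification for the admissibility of central characters; one would prefer to deduce the integrality of $\lambda + \frac{\b1}{2}$ directly from the explicit generators of $W$ in Lemma \ref{comm-lemm}, so as to keep the argument self-contained within the paper.
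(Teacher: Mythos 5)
Your overall strategy (produce a weight module with $G_{\ba}$, invoke Mathieu to pin down $\lambda$, then translate to $\chi_{-\frac{\b1}{2}}$) shares its key ingredients with the paper, but you run the translation functor through the cuspidal category, whereas the paper runs it through the Whittaker category $\mathcal{H}_{\b1}$ and then applies Skryabin's equivalence. That difference is where your argument develops genuine gaps. First, your claim that $G_{\ba}(V)$ is cuspidal because the formulas (\ref{x-def2})--(\ref{x-def3}) ``express each negative root vector as a product of an element of $W$ with integer powers of the bijectively acting'' generators is not correct as stated: those formulas express each root vector as a \emph{sum} of such terms, e.g.\ $X_{\e_k+\e_1}=X_{\e_1-\e_k}^{-1}X_{2\e_1}A_{\e_k+\e_1}+h_kX_{\e_1-\e_k}^{-1}X_{2\e_1}$, so injectivity of $X_{\e_k+\e_1}$ on $G_{\ba}(V)$ amounts to certain scalars (depending on $\ba$) avoiding the spectrum of $A_{\e_k+\e_1}$ on $V$; there is no reason for the $A$'s themselves to act injectively on an arbitrary finite-dimensional $W$-module, and the paper only carries out this verification for $\lambda=-\frac{\b1}{2}$ (in Theorem \ref{equ-cw}), where the spectrum is known explicitly from Proposition \ref{one-dim} and $\ba$ is chosen generically. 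The paper sidesteps this entirely in the proof of the present lemma: it only needs $\sum_{\ba}G_{\ba}(V)$ to be a \emph{uniformly bounded} weight module to apply Lemma 9.1 of \cite{M}, not a cuspidal one.

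Second, your final step ``conjugating $T$ and its inverse by $G$ and $F$'' presupposes that $F_{\ba}$ and $G_{\ba}$ are mutually inverse equivalences between $\mathcal{C}_{\ba}(\chi_\lambda)$ and $W(\chi_\lambda)$-$\mathrm{fmod}$ for \emph{general} $\lambda$. That is precisely Theorem \ref{equ-cw}, whose proof in the paper reduces to $\lambda=-\frac{\b1}{2}$ by invoking the present lemma (together with \cite{BKLM}); so as organized, your argument is circular unless you independently establish the equivalence $\mathcal{C}_{\ba}(\chi_\lambda)\simeq W(\chi_\lambda)\text{-fmod}$ at arbitrary $\lambda$, which again requires the cuspidality of $G_{\ba}(V)$ discussed above as well as $G_{\ba}F_{\ba}\cong\mathrm{Id}$ on cuspidal modules. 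The paper's route --- translation functors on $\mathcal{H}_{\b1}(\chi_\lambda)\simeq\mathcal{H}_{\b1}(\chi_{-\frac{\b1}{2}})$ followed by the Skryabin equivalence $\mathcal{H}_{\b1}(\chi_\lambda)\simeq W(\chi_\lambda)\text{-fmod}$ --- needs neither of these facts and keeps the logical order of the two results intact.
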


\begin{proof}
Suppose that $V$  is a module in $W(\chi_\lambda)\text{-fmod}$.
Then $\sum_{\ba\in \C^n} G_{\ba} (V)$ is a uniformly bounded weight
module with the generalized central character $\chi_{\lambda}$. By Lemma 9.1 in \cite{M}, $\lambda$ satisfies the conditions:
$$\lambda_1-\lambda_2,\dots, \lambda_{n-1}-\lambda_{n},\lambda_n\in\frac{1}{2}+\Z, \lambda_{n-1}+\lambda_n\in \Z_{\geq -2}.$$
Using the translation functor (see \cite{BG}), $\mathcal{H}_{\b1}(\chi_\lambda)$ is equivalent to $\mathcal{H}_{\b1}(\chi_{-\frac{\b1}{2}})$. By the
Skryabin equivalence between $\mathcal{H}_{\b1}(\chi_\lambda)$
and $W(\chi_\lambda)\text{-fmod}$,  each  $W(\chi_\lambda)\text{-fmod}$ is equivalent to $W(\chi_{-\frac{\b1}{2}})\text{-fmod}$.
\end{proof}

\begin{lemma}\label{unique} The one dimensional module $V_{-\frac{\b1}{2}}$ defined in Proposition \ref{one-dim} is the unique simple module
in the block $W(\chi_{-\frac{\b1}{2}})$-$\mathrm{fmod}$.
\end{lemma}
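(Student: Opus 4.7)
The plan is to combine Premet's uniqueness of one-dimensional $W$-representations with the semisimplicity of the cuspidal category and Mathieu's classification to force every simple object in $W(\chi_{-\b1/2})$-$\mathrm{fmod}$ to be one-dimensional.

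First I will verify that $V_{-\b1/2}$ lies in the block $W(\chi_{-\b1/2})$-$\mathrm{fmod}$. By Proposition~\ref{one-dim}, $V_{-\b1/2}$ is realised as the one-dimensional Whittaker subspace $\mathrm{wh}_{\b1}(P_1^{\tau})=\C t_1^{-1}$, so its central character coincides with that of $P_1^{\tau}$. A direct calculation tracing the Casimir and the remaining generators of the centre through the homomorphism $\phi$ of Proposition~\ref{D-map} and the automorphism $\tau$ shows this character is $\chi_{-\b1/2}$. By Corollary~4.1 of \cite{Pr2}, $V_{-\b1/2}$ is moreover the unique one-dimensional representation of $W$, so it suffices to prove that every simple object of $W(\chi_{-\b1/2})$-$\mathrm{fmod}$ has dimension one.

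To this end, let $V$ be any simple module in $W(\chi_{-\b1/2})$-$\mathrm{fmod}$, and for $\ba\in\C^{n}$ form $M:=G_{\ba}(V)=U_{S}\otimes_{U(\mh_{n})W}V$. Using the tensor product decomposition $U_{S}\cong B\otimes W$ of Theorem~\ref{main-iso-th}, we identify $M\cong E\otimes V$, where $E:=B\otimes_{U(\mh_{n})}\C_{\ba}$ is a countable-dimensional simple weight $B$-module with basis consisting of monomials in $X_{2\e_{1}}^{\pm 1},X_{\e_{1}-\e_{j}}^{\pm 1}$. Dixmier's version of Schur's lemma gives $\mathrm{End}_{B}(E)=\C$, and combined with $\mathrm{End}_{W}(V)=\C$ this shows $M$ is simple as a $U_{S}$-module, with every weight space of dimension $\dim V$. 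For $\ba$ outside a proper algebraic subset of $\C^{n}$, the formulas (\ref{x-def2})--(\ref{x-def3}) show that the remaining root vectors act injectively on $M$: for instance $X_{\e_{k}+\e_{1}}$ acts as $X_{\e_{1}-\e_{k}}^{-1}X_{2\e_{1}}(A_{\e_{k}+\e_{1}}+h_{k})$, and for generic $\ba$ the $W$-endomorphism $A_{\e_{k}+\e_{1}}+h_{k}$ of $V$ is invertible. Thus $M$ is cuspidal with central character $\chi_{-\b1/2}$.

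Finally, I will invoke the semisimplicity of the cuspidal category $\mathcal{C}$ together with Mathieu's classification \cite{M}: the unique simple object $L$ of $\mathcal{C}_{\ba}(\chi_{-\b1/2})$ has one-dimensional weight spaces, and hence $M\cong L^{\oplus\dim V}$ as $\sp_{2n}$-modules. Since $L$ is cuspidal, $S$ acts injectively on $L$, so $L_{S}$ is a simple $U_{S}$-module; localising the decomposition at $S$ yields $M\cong L_{S}^{\oplus\dim V}$ as $U_{S}$-modules. Comparing with the simplicity of $M$ proved above forces $\dim V=1$, whence $V\cong V_{-\b1/2}$. The main obstacle will be the genericity step: ensuring that the finite list of non-vanishing conditions extracted from (\ref{x-def2})--(\ref{x-def3}) can be simultaneously satisfied by some $\ba$, so that $M$ is truly cuspidal rather than merely weight.
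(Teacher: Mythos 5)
Your reduction is the right one---show that every simple object of $W(\chi_{-\frac{\b1}{2}})$-$\mathrm{fmod}$ is one-dimensional and then quote Corollary~4.1 of \cite{Pr2}---and your first paragraph (identifying the central character of $V_{-\frac{\b1}{2}}$ through $P_1^{\tau}$ and $\phi$) matches the paper. But the route you take for the reduction has a genuine gap at the cuspidality step. For the root vectors $X_{-\e_k-\e_l}$ with $k,l\geq 2$, formula (\ref{x-def3}) gives $X_{-\e_k-\e_l}=A_{-\e_k-\e_l}X_{\e_1-\e_k}X_{\e_1-\e_l}X_{2\e_1}^{-1}$, so on $G_{\ba}(V)=\C[X^{\pm}]\otimes V$ one gets $X_{-\e_k-\e_l}(X^{\bm}\otimes v)=X^{\bm+e_k+e_l-e_1}\otimes A_{-\e_k-\e_l}v$. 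Injectivity of $X_{-\e_k-\e_l}$ on $G_{\ba}(V)$ is therefore \emph{equivalent} to injectivity of the fixed operator $A_{-\e_k-\e_l}\in W$ on $V$, and this does not depend on $\ba$ at all: no genericity assumption on $\ba$ can rescue it. For an arbitrary simple $V$ in the block you have no a priori control on the spectrum of $A_{-\e_k-\e_l}$ (the paper only knows it acts by $-1$ \emph{after} it knows $V\cong V_{-\frac{\b1}{2}}$, i.e.\ after the lemma is proved), so your claim that ``for $\ba$ outside a proper algebraic subset the remaining root vectors act injectively'' fails exactly where it is needed. A secondary issue: even where genericity does work (e.g.\ $A_{\e_k+\e_1}+h_k$), the conditions to avoid form a countable union of translates of $\mathrm{Spec}(A_{\e_k+\e_1})$ by $\Z$, not a proper algebraic subset; and the input that the unique simple of $\mathcal{C}_{\ba}(\chi_{-\frac{\b1}{2}})$ is completely pointed is itself a nontrivial consequence of \cite{M,BL,BKLM} that you would need to extract carefully, which makes the argument uncomfortably close to re-proving Theorem~\ref{equ-cw} before Lemma~\ref{unique} is available.

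The paper avoids all of this by staying entirely on the $W$-algebra side: by Theorem~6.2 of \cite{Pr2} the central character of a simple finite-dimensional $W(\sp_{2n},e)$-module determines its dimension, so every simple in the block of $\chi_{-\frac{\b1}{2}}$ has the same dimension as $V_{-\frac{\b1}{2}}$, namely one, and Corollary~4.1 of \cite{Pr2} then gives uniqueness. Cuspidality of $G_{\ba}(V)$ is established only afterwards, in the proof of Theorem~\ref{equ-cw}, precisely because by then one may substitute the explicit scalars of Proposition~\ref{one-dim} (in particular $A_{-\e_k-\e_l}\mapsto -1$) and check injectivity directly. If you want to keep your approach, you must either prove independently that each $A_{-\e_k-\e_l}$ acts injectively on every simple object of the block, or replace the injectivity check by a density argument (the support of $G_{\ba}(V)$ is the full coset $\ba+\mathbf{B}$, and a simple weight module with full support and bounded multiplicities is cuspidal by Fernando's theorem); as written, the step is not justified.
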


\begin{proof}Since
both $P^\tau_1$ (see Example \ref{one-dim}) and the simple highest weight module $L(-\frac{\b1}{2})$
are annihilated  by $\ker \phi$, they have the same   central character $\chi_{-\frac{\b1}{2}}$.
By Theorem 6.2 in \cite{Pr2}, the central characters of simple finite-dimensional modules over $W(\sp_{2n},e)$ with different dimensions are  distinct. Then the uniqueness of $V_{-\frac{\b1}{2}}$ follows.
\end{proof}

In the following theorem, we show that  each  $W(\chi_{\lambda})\text{-fmod}$  is  equivalent to the corresponding block of the cuspidal category.

\begin{theorem}
\label{equ-cw}For any $ \lambda\in \C^n$, there exists $\ba\in\C^n$ such that
$W(\chi_\lambda)$-$\mathrm{fmod}$   is equivalent to $\mathcal{C}_{\ba}(\chi_\lambda)$.
\end{theorem}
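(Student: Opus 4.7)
My plan is to show that the functors $F_\ba$ and $G_\ba$ defined just above are mutually inverse equivalences for a suitable choice of $\ba$. By Lemma \ref{w-half-lemma} the category $W(\chi_\lambda)$-$\mathrm{fmod}$ is equivalent to $W(\chi_{-\b1/2})$-$\mathrm{fmod}$, so the essential content is to produce, for each $\lambda$ for which the category is nonempty, an $\ba$ in the coset of $\C^n/\mathbf{B}$ dictated by Mathieu's classification of cuspidal $\sp_{2n}$-blocks, with sufficiently generic entries so that $\mathcal{C}_\ba(\chi_\lambda)$ is nonempty and every root vector acts injectively on the $G_\ba$-induced modules.

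For the composition $F_\ba \circ G_\ba \cong \mathrm{id}$, the tensor decomposition $U_S \cong B \otimes W$ of Theorem \ref{main-iso-th} identifies $G_\ba(V) \cong B \otimes_\C V$ as a vector space. Using $[h_1,X_{2\e_1}]=2X_{2\e_1}$, $[h_1,X_{\e_1-\e_j}]=X_{\e_1-\e_j}$ and $[h_j,X_{\e_1-\e_j}]=-X_{\e_1-\e_j}$ for $j\geq 2$, a direct calculation shows that the monomial basis of $B$ in the generators $X_{2\e_1}^{\pm 1}, X_{\e_1-\e_j}^{\pm 1}$ carries pairwise distinct weights exhausting $\ba+\mathbf{B}$, so $G_\ba(V)_\ba = 1\otimes V$ and the natural unit $V \to F_\ba G_\ba(V)$ is an isomorphism. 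For the opposite composition $G_\ba \circ F_\ba \cong \mathrm{id}$, take $M \in \mathcal{C}_\ba(\chi_\lambda)$; cuspidality combined with finite-dimensionality and Mathieu's equidimensionality of weight spaces in a uniformly bounded block forces $X_{2\e_1}, X_{\e_1-\e_2}, \dots, X_{\e_1-\e_n}$ to act bijectively on $M$. Hence $M$ extends uniquely to a $U_S$-module, and the decomposition $U_S \cong B \otimes W$ identifies $M$ with $B \otimes_\C M_\ba$ as a $U_S$-module, realizing the natural counit $G_\ba F_\ba(M) \to M$ as an isomorphism.

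The main obstacle, and the only substantive step, is verifying that $G_\ba(V) \in \mathcal{C}_\ba(\chi_\lambda)$: that every root vector acts injectively on $G_\ba(V)$. The root vectors in $S$ are invertible by construction. For each remaining root vector $X_\alpha$, the inversion formulas (\ref{x-def2}) and (\ref{x-def3}) express $X_\alpha$ explicitly as an element of $U_S = B \otimes W$, whose action on $G_\ba(V) = B \otimes_\C V$ has a leading $B$-component acting without kernel provided the $h_i$-eigenvalues of $V$ avoid certain integer-translate hyperplanes. This is precisely the genericity condition on $\ba$ used in the first paragraph. Finite-dimensionality of weight spaces, support contained in $[\ba]$, and the central character $\chi_\lambda$ all follow at once from the construction and the identification $Z(U_S)\cong Z(W)$ of Theorem \ref{main-iso-th}, completing the equivalence.
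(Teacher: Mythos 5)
Your overall route coincides with the paper's: reduce to the block of $\chi_{-\frac{\b1}{2}}$, choose $\ba$ generic, show $G_\ba(V)$ is cuspidal, and check that the unit and counit are isomorphisms. Your treatment of $G_{\ba}F_{\ba}\cong\mathrm{Id}$ is in fact more complete than the paper's (which only says ``we can check''), and the observation that injectivity of both $X_\alpha$ and $X_{-\alpha}$ on finite-dimensional weight spaces forces bijectivity is sound. However, there is a genuine gap at what you yourself call the only substantive step: injectivity of the non-invertible root vectors on $G_\ba(V)=B\otimes V$. Since each $X_\alpha$ is an $\mh_n$-weight vector, it maps the weight space $X^{\bm}\otimes V$ into a single weight space $X^{\bm'}\otimes V$, and by the inversion formulas (\ref{x-def2}), (\ref{x-def3}) it acts there as $X^{\bm'}\otimes T_{\bm}$ for an operator $T_{\bm}$ on $V$ built from the generators $A_\beta$ of $W$ and scalars depending on $\ba$ and $\bm$; for instance $X_{\e_k+\e_1}(X^{\bm}\otimes v)=X^{\bm-e_k+e_1}\otimes(a_k-m_k+1+A_{\e_k+\e_1})v$. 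There is no separate ``leading $B$-component'' whose nonvanishing settles the matter: injectivity of $X_\alpha$ is equivalent to injectivity of every $T_{\bm}$ on $V$, and this requires controlling the spectra of the $A_\beta$ (and of the more involved polynomial expressions in them occurring in $X_{\e_k-\e_1}$, $X_{-\e_1-\e_k}$, $X_{-2\e_1}$) on $V$. A genericity condition on $\ba$ alone does not deliver this for an arbitrary finite-dimensional $V$ in an arbitrary block.

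The paper closes this gap by a device you omit. After reducing both sides to the block of $\chi_{-\frac{\b1}{2}}$ --- which on the cuspidal side uses the equivalence $\mathcal{C}(\chi_\lambda)\simeq\mathcal{C}(\chi_{-\frac{\b1}{2}})$ from \cite{BKLM}, not only Lemma \ref{w-half-lemma} on the $W$-side --- Lemma \ref{unique} identifies the unique simple object of $W(\chi_{-\frac{\b1}{2}})$-$\mathrm{fmod}$ with the one-dimensional module of Proposition \ref{one-dim}, on which every $A_\beta$ acts by an explicitly computed scalar. Each $T_{\bm}$ is then a concrete scalar such as $a_k-m_k+\tfrac{1}{2}$ or $(a_k-m_k+\tfrac{1}{2})(|\bm|+a_1+m_1-\tfrac{1}{2})$, nonzero once $a_i-\tfrac{1}{2}\notin\Z$ for all $i$; general $V$ is then handled by induction on length. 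Without this reduction to the explicitly known one-dimensional module, your genericity claim is unsubstantiated, and the choice of $\ba$ could a priori depend on $V$ rather than only on the block.
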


\begin{proof}
 From the result in \cite{BKLM}, if $\mathcal{C}(\chi_{\lambda})$ is non-empty, then it is equivalent to $\mathcal{C}(\chi_{-\frac{\b1}{2}})$.
By Lemma \ref{w-half-lemma}, it suffices to consider that
$\lambda=-\frac{\b1}{2}$. Choose $\ba\in\C^n$ such that $a_1-\frac{1}{2}, \dots, a_n-\frac{1}{2}\not\in \Z$.

We will show that
$G_\ba(V)$ is a cuspidal $\sp_{2n}$-module, i.e., $G_\ba(V)\in \mathcal{C}_{\ba}(\chi_{-\frac{\b1}{2}})$. First, suppose that $V$ is a simple module in $W(\chi_{-\frac{\b1}{2}})\text{-fmod}$. By Lemma \ref{unique},
$V$ is isomorphic to the module $V_{-\frac{\b1}{2}}$ defined in Proposition \ref{one-dim}.
 Denote $X^{\bm}=X_{2\e_1}^{m_1}X_{\e_1-\e_2}^{m_2}\dots X_{\e_1-\e_n}^{m_n}$. Then
$$\aligned h_1(X^{\bm}\otimes v)&=(|\bm|+m_1+a_1)(X^{\bm}\otimes v),\\
h_i(X^{\bm}\otimes v)&=(a_i-m_i)(X^{\bm}\otimes v), i=2,\dots,n,\\
I_n(X^{\bm}\otimes v)&=(|\ba|+2m_1)(X^{\bm}\otimes v),
\endaligned$$
for any $v\in V$. By Proposition \ref{one-dim} and the formula (\ref{x-def2}), we can verify that
$$
 \aligned
  X_{\e_k+\e_1} (X^{\bm}\otimes v)
 &=(X_{\e_1-\e_k}^{-1}X_{2\e_1}A_{\e_k+\e_1}
 +h_kX_{\e_1-\e_k}^{-1}X_{2\e_1})(X^{\bm}\otimes v)\\
 &=X^{\bm-e_k+e_1}\otimes (a_k-m_k+1+A_{\e_k+\e_1})v\\
  &=(a_k-m_k+\frac{1}{2})X^{\bm-e_k+e_1}\otimes v.
 \endaligned$$
 Similarly, we obtain that
$$
\aligned
X_{\e_i-\e_j} (X^{\bm}\otimes v)&=(a_i-m_i+ \frac{1}{2}) X^{\bm-e_i+e_j}\otimes v,\\
X_{\e_k+\e_l} (X^{\bm}\otimes v)&= (a_l-m_l+\frac{1}{2})(a_k-m_k+\delta_{kl}+\frac{1}{2})X^{\bm-e_k-e_l+e_1}\otimes v,\\
X_{\e_k-\e_1}(X^{\bm}\otimes v)&=(a_k-m_k+\frac{1}{2})
 (|\bm|+a_1+m_1-\frac{1}{2}) X^{\bm-e_k}\otimes v,
 \endaligned $$
and $$
\aligned
X_{-\e_k-\e_l}(X^{\bm}\otimes v)&=-X^{\bm+e_k+e_l-e_1}\otimes  v,\\
X_{-\e_1-\e_k}(X^{\bm}\otimes v)&=(\frac{1}{2}
-|\bm|-a_1-m_1) X^{\bm+e_k-e_1}\otimes v,\\
X_{-2\e_1}(X^{\bm}\otimes v)&=- (\frac{3}{2}
-|\bm|-a_1-m_1)  (\frac{1}{2}
-|\bm|-a_1-m_1)X^{\bm-e_1}\otimes v.
\endaligned $$

By the condition on $\ba$, $G_\ba(V_{-\frac{\b1}{2}})$ is a cuspidal $\sp_{2n}$-module.
By induction on the length of $V$, we can show that $G_\ba(V)$ is a cuspidal $\sp_{2n}$-module,
 for any $V\in W(\chi_{-\frac{\b1}{2}})\text{-fmod}$.

We can check that $G_{\ba}F_{\ba}\cong \text{Id}$ and $F_{\ba}G_{\ba}\cong \text{Id}$. Therefore
$F_{\ba}, G_{\ba}$ are inverse equivalence
 between  $\mathcal{C}_{\ba}(\chi_\lambda)$ and $W(\chi_\lambda)$-$\mathrm{fmod}$.
 \end{proof}

 \begin{remark} In \cite{BKLM}, it was shown that the category of cuspidal $\sp_{2n}$-modules is semi-simple. Premet has shown that the category of finite-dimensional modules over the minimal  nilpotent
 $W(\sp_{2n},e)$  is also semi-simple, see Corollary 7.1 in \cite{Pr2}. Theorem \ref{equ-cw} explains this coincidence.
 \end{remark}

 For $\mathbf{a}=(a_1,\dots,a_n)\in \C^n$, let $N(\ba)$  be the linear span of
 $$\{t^{\mathbf{b}}:=t_1^{a_1+b_1}t_2^{a_2+b_2}\dots t_n^{a_n+b_n}\mid \mathbf{b}\in \mathbf{B}\}.$$
 Then $N(\ba)$ is an $\sp_{2n}$-module under the map $\phi$ defined in Proposition \ref{D-map}. Since
both $N(\ba)$ and the simple highest weight module $L(-\frac{\b1}{2})$
are annihilated  by $\ker \phi$, i.e., they have the same annihilator,
so the central character of $N(\ba)$ is $\chi_{-\frac{\b1}{2}}$.
If $a_i\not\in \Z$ for all $i=1,\dots, n$, the $\sp_{2n}$-module $N(\ba)$ is simple and cuspidal. It has been shown that every completely pointed simple cuspidal $\sp_{2n}$-module is isomorphic to $N(\ba)$
for $\mathbf{a}=(a_1,\dots,a_n)\in \C^n$,
 with $a_i\not\in \Z$ for all $i=1,\dots, n$, see \cite{BL}.
By Theorem \ref{equ-cw}, $G_{\ba+\frac{\b1}{2}}(V_{-\frac{1}{2}})\cong N(\ba)$.

\vspace{2mm}

\

\noindent
{\bf Acknowledgments. }This research is supported  by National Natural Science Foundation of China (Grants
12371026, 12271383).
%The authors would like to thank
%the referee for nice suggestions concerning the presentation of the
%paper.

\vspace{4mm}

 \noindent G.L.: School of Mathematics and Statistics,
and  Institute of Contemporary Mathematics,
Henan University, Kaifeng 475004, China. Email: liugenqiang@henu.edu.cn

\vspace{0.2cm}

 \noindent M.L.: School of Mathematics and Statistics,
Henan University, Kaifeng 475004, China. Email: 13849167669@163.com

\end{document}